\let\bbordermatrix\bordermatrix 
\patchcmd{\bbordermatrix}{8.75}{4.75}{}{}
\patchcmd{\bbordermatrix}{\left(}{\left[}{}{}
\patchcmd{\bbordermatrix}{\right)}{\right]}{}{}
\theoremstyle{plain}
\newtheorem{thm}{Theorem}
\newtheorem{prop}{Proposition}
\newtheorem{lem}{Lemma}
\newtheorem{cor}{Corollary}
\newtheorem{conj}{Conjecture}
\newcommand{\abs}[1]{\left\vert #1 \right\vert}
\def\e{\varepsilon}
\def\d{\delta}
\theoremstyle{definition}
\newtheorem{example}{Example}
\theoremstyle{remark}
\newtheorem*{rem}{Remark}
\begin{document}

\title{The Alexander polynomial for Virtual Twist Knots}

\author{Isaac Benioff}
\address{Williams College}
\email{isaac.benioff@gmail.com}

\author{Blake Mellor}
\address{Department of Mathematics, Loyola Marymount University}
\email{blake.mellor@lmu.edu}

\begin{abstract}
We define a family of virtual knots generalizing the classical twist knots.  We develop a recursive formula for the Alexander polynomial $\Delta_0$ (as defined by Silver and William \cite{sw}) of these virtual twist knots.  These results are applied to provide evidence for a conjecture that the odd writhe of a virtual knot can be obtained from $\Delta_0$.
\end{abstract}

\date{\today}

\maketitle

\section{Introduction}

Since the introduction of virtual knots by Kauffman \cite{ka} as a generalization of classical knot theory, there has been considerable work in the area (see, for example, the book-length survey by Manturov and Ilyutko \cite{mi}). As with classical knots, one of the primary areas of research on virtual knots is defining invariants that will distinguish virtual knots. Often, these invariants are generalizations of classical knot invariants; however, the generalizations are often more complex than the original classical invariants.

In this paper, we focus on the virtual version of the Alexander polynomial. The classical Alexander polynomial is just the first of a sequence of invariants derived from the Alexander module; Silver and Williams \cite{sw} generalized this construction to define an analogous module for virtual knots, and a series of polynomials $\Delta_i(K)(u,v)$ for $i\geq 0$.  For a classical knot, $\Delta_0$ is always trivial, and the polynomial $\Delta_1$ is equal to the classical Alexander polynomial evaluated at $uv$. For virtual knots, however, $\Delta_0$ is \emph{not} trivial, and so provides a tool for distinguishing among virtual knots (and distinguishing virtual knots from classical knots).

While the invariant $\Delta_0$ defined by Silver and Williams (and, in somewhat different form, by Sawollek \cite{sa}) is straightforward to compute for specific examples, it has not been computed for many infinite families of virtual knots. In part, this is because it does not satisfy the same nice skein relation as the classical Alexander polynomial, so the computations become much more complex. In this paper we will provide a recursive formula for computing $\Delta_0$ for an infinite family of \emph{virtual twist knots} which generalize the classical twist knots. 

This formula allows us to use the virtual twist knots as test cases for conjectures about the behavior of $\Delta_0$ in general.  In particular, we use it to test a conjectured relation between $\Delta_0$ and the \emph{odd writhe} of Kauffman \cite{ka2}. Namely, $2\abs{\overline{\Delta}_0(K)(-1,-1)} = \abs{OW(K)}$, where $\overline{\Delta}_0$ is a particular factor of $\Delta_0$, and $OW(K)$ is the odd writhe of $K$.  Using our recursive formula, we are able to prove the relationship for the virtual twist knots, and we conjecture that it holds for all virtual knots.

\section{Virtual knots and the Alexander polynomial}

\subsection{Virtual knots.} Our approach to virtual knots will be combinatorial. Kauffman \cite{ka} showed that virtual knots can be defined as equivalence classes of diagrams modulo certain moves, generalizing the Reidemeister moves of classical knot theory. Diagrams for virtual knots contain both classical crossings (positive and/or negative crossings, if the knot is oriented) and \emph{virtual} crossings, as shown in Figure \ref{F:crossings}.  Two diagrams are equivalent if they are related by a sequence of the Reidemeister moves shown in Figure \ref{F:reidemeister}. Note that moves (I)--(III) are the classical Reidemeister moves. Kauffman \cite{ka} showed that classical knots are equivalent by this expanded set of Reidemeister moves if and only if they are equivalent by the classical Reidemeister moves, so classical knot theory embeds inside virtual knot theory.

\begin{figure}[htbp]
\begin{center}
\scalebox{.6}{\includegraphics{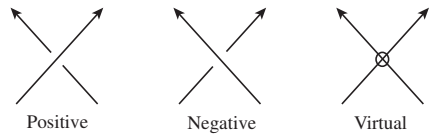}}
\end{center}
\caption{Classical and virtual crossings}
\label{F:crossings}
\end{figure}

\begin{figure}[htbp]
\begin{center}
\scalebox{.8}{\includegraphics{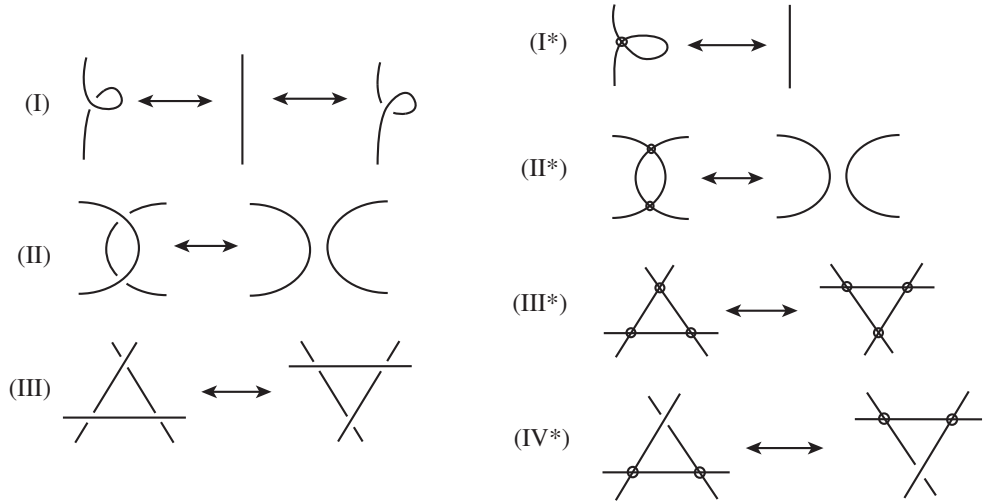}}
\end{center}
\caption{Reidemeister moves for virtual knots}
\label{F:reidemeister}
\end{figure}

\subsection{Alexander polynomial}

Our definition of the Alexander polynomial $\Delta_0(u,v)$ follows Silver and Williams \cite{sw}. Given a virtual knot diagram $D$ with $n$ classical crossings, labeled from $c_1$ to $c_n$, an \emph{arc} of the diagram extends from one classical crossing to the next classical crossing (ignoring any virtual crossings).  Note that these go from crossing to crossing, \emph{not} undercrossing to undercrossing (which is the usual notion of an arc in a classical knot diagram).  So $D$ has $2n$ arcs, which we label from $a_1$ to $a_{2n}$. At each classical crossing, we define two relations among the four arcs incident to that crossing.  The relations depend on whether the crossing is positive or negative, as shown in Figure \ref{F:alexander}.  The result is a system of $2n$ linear equations in $2n$ variables (the arcs).  We call the coefficient matrix for this system the {\it Alexander matrix} for the knot diagram.  We define $\Delta_0(D)(u,v)$ as the determinant of the Alexander matrix. Note that changing the labeling of the arcs will permute the columns of the matrix, and can change the sign of $\Delta_0(D)$ (on the other hand, reordering the crossings permutes \emph{pairs} of rows, so does not change the determinant).

\begin{figure}[htbp]
\begin{center}
\scalebox{.8}{\includegraphics{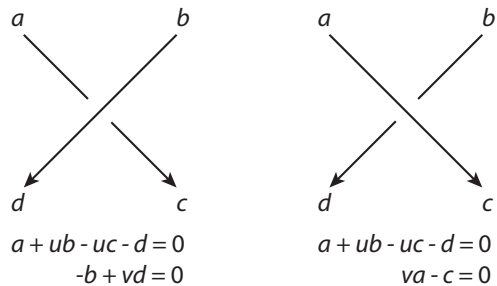}}
\end{center}
\caption{Relations at a positive (left) and negative (right) crossing}
\label{F:alexander}
\end{figure}

To extend $\Delta_0$ from an invariant of diagrams to an invariant of virtual knots, we need to see how it is affected by the Reidemeister moves.  Since we only get relations at the classical crossings, moves (I*)--(IV*) have no effect (these moves do not change the arrangement of classical crossings). Silver and Williams \cite{sw} analyzed the effect of Reidemeister moves (I)--(III), but only modulo the effect of permuting the labels on the arcs.  In our calculations, we will need to keep track of these effects, so we are going to look more carefully at Reidemeister moves (I) and (II) (we do not use move (III) in our calculations). In fact, we discovered that Silver and Williams mistakenly claim that a Reidemeister (II) move does not affect $\Delta_0$; this error is corrected in Lemma \ref{L:R2}.  First, however, we analyze the effect of a Reidemeister (I) move. As in \cite{sw}, we distinguish four types of Reidemeister (I) move, shown in Figure \ref{F:r1alexander}.  Notice that we are fixing the labeling of the arcs; in our proofs, we will need to account for differences in the labelings, but this is not difficult.

\begin{figure}[htbp]
\begin{center}
\scalebox{.6}{\includegraphics{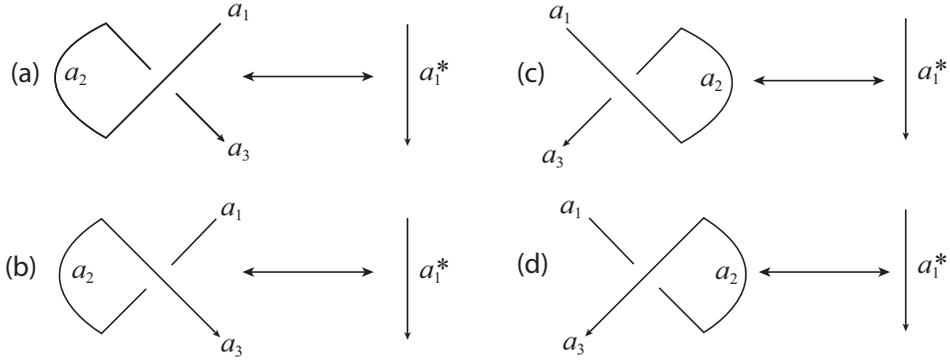}}
\end{center}
\caption{Four types of Reidemeister (I).  In all these diagrams, $a_1, a_2, a_3$ are the first three arcs.}
\label{F:r1alexander}
\end{figure}

\begin{lem}\label{L:R1}
If $D$ is a virtual knot diagram, and $D'$ is the result of applying a Reidemeister (I) move to remove a crossing as in Figure \ref{F:r1alexander}, then: \begin{itemize}
\item $\Delta_0(D) = (uv)\Delta_0(D')$ for a (Ia) or (Ib) move, and 
\item $\Delta_0(D) = (-1)\Delta_0(D')$ for a (Ic) or (Id) move.
\end{itemize}
\end{lem}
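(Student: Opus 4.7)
The plan is to show that the Alexander matrix $M(D)$ has a sparse structure forced by the R1 configuration, so that $\det M(D)$ factors as a simple $2\times 2$ determinant times $\det M(D')$. I would label the R1 crossing as $c_1$, and (as in Figure \ref{F:r1alexander}) let $a_1, a_2, a_3$ be the three arcs involved, with $a_2$ the loop. The key structural observation is that $a_2$ is entirely contained in the R1 configuration, so it appears only in the two relations at $c_1$. Hence the column of $M(D)$ indexed by $a_2$ vanishes outside the first two rows, and the first two rows vanish outside the columns indexed by $a_1,a_2,a_3$.

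Before expanding, I would record one general property I will use: in each row of the Alexander matrix, the coefficients sum to zero. This follows because assigning the same value to every arc satisfies both relations at a positive or negative crossing (Figure \ref{F:alexander}), which forces the row sums to vanish.

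Next, I would expand $\det M(D)$ by Laplace along the first two rows. Only three $2\times 2$ minors are nonzero (those in columns $\{a_1,a_2\}$, $\{a_1,a_3\}$, $\{a_2,a_3\}$). The $\{a_1,a_3\}$ minor pairs with a complementary block that still contains the zero column $a_2$, so that term vanishes. The two surviving terms each involve a complementary block obtained by removing one of the columns $a_1$ or $a_3$. Applying the row-sum identity to the $c_1$ rows eliminates the $a_3$ coefficients in favor of those in columns $a_1, a_2$, and a short calculation shows that the two surviving contributions combine into a single expression of the form
$$\det M(D) \;=\; \lambda \cdot \det\bigl( C_{a_1} + C_{a_3} \,\big|\, M_{\text{rest}} \bigr),$$
where $C_{a_1}, C_{a_3}$ are the columns of $M(D)$ for $a_1, a_3$ restricted to the non-$c_1$ rows, $M_{\text{rest}}$ collects the other columns, and $\lambda$ is (up to sign) the $2\times 2$ minor of the $c_1$ rows in columns $\{a_1, a_2\}$. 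Since removing the R1 crossing identifies $a_1$ with $a_3$, the matrix on the right-hand side is precisely $M(D')$.

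All that remains is to evaluate $\lambda$ in each of the four R1 cases. I would read off the two $c_1$ relations directly from the positive/negative crossing templates in Figure \ref{F:alexander}, substitute the identification of $a_2$ with the matching strand(s) at $c_1$ (which depends on whether the loop sits above or below the crossing and whether the crossing is positive or negative), and compute the $2\times 2$ minor. The main obstacle here is purely bookkeeping: in each configuration (Ia)--(Id) one must be careful about which strands at the crossing are over/under and incoming/outgoing, and how those choices match the loop arc $a_2$. The resulting values of $\lambda$ should evaluate to $uv$ in cases (Ia), (Ib) and to $-1$ in cases (Ic), (Id), giving exactly the two formulas stated in the lemma.
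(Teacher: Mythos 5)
Your overall strategy is the same as the paper's: use the fact that the loop arc $a_2$ occurs only in the two relations at the R1 crossing (so its column is supported on those two rows, and those two rows are supported on the columns $a_1,a_2,a_3$), expand the determinant across those rows, observe that the term whose complementary minor retains the zero $a_2$-column dies, and combine the two survivors into $\lambda\det\bigl[\,C_{a_1}+C_{a_3}\mid M_{\mathrm{rest}}\,\bigr]=\lambda\det M(D')$. The paper does this by a single-row cofactor expansion rather than a two-row Laplace expansion, but the computation is the same, and your identification of $M(D')$ with the column-summed matrix matches the paper exactly.

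There is, however, one genuine error: the ``row-sum identity'' you invoke is false. Assigning the same value to every arc does \emph{not} satisfy both relations at a crossing. One of the two relations at each crossing has the form $v\,a_i - a_j = 0$ (see, e.g., the rows $(0,v,0,-1)$ and $(v,0,-1,0)$ in the matrices of Lemma \ref{L:skein}), whose coefficients sum to $v-1\neq 0$. Consequently you cannot deduce that the column of $a_3$ (restricted to the $c_1$ rows) equals minus the sum of the $a_1$ and $a_2$ columns, which is what your argument needs in order to conclude that the two surviving Laplace coefficients $\det(C_{a_1},C_{a_2})$ and $\det(C_{a_2},C_{a_3})$ coincide. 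The coincidence is nevertheless true in each of the four configurations --- in case (Ia) the $2\times 3$ block is $\begin{pmatrix} u & 0 & -u\\ -1 & v & 0\end{pmatrix}$ and both minors equal $uv$ --- but for a case-specific reason: after the loop arc's contributions cancel, one relation has $a_1$- and $a_3$-coefficients summing to zero while the other does not involve $a_3$ at all, so $C_{a_1}+C_{a_3}$ is proportional to $C_{a_2}$. Since you already plan to compute the $2\times 2$ minors explicitly in each of (Ia)--(Id) to evaluate $\lambda$, the fix is simply to verify the equality of the two relevant minors directly in each case rather than appealing to the nonexistent general identity; with that replacement the proof goes through and agrees with the paper's.
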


\begin{proof}
We will prove the result for a move of type (Ia); the proofs for the other cases are similar.  The crossing shown in Figure \ref{F:r1alexander}(a) gives two Alexander relations:
$$a_2 + ua_1 - ua_3 - a_2 = 0 \implies ua_1 - ua_3 = 0$$
$$-a_1 + va_2 = 0$$
Since we are assuming $a_1, a_2, a_3$ are the first three arcs, the Alexander matrix $M$ for $D$ is:
$$M = \begin{blockarray}{cccc}
    a_1 & a_2 & a_3 & \\
    \begin{block}{(ccc|c)}
    	u & 0 & -u & \bf{0} \\
    	-1 & v & 0 & \bf{0} \\ \cline{1-4}
    	 \beta & \bf{0} & \gamma & \bf{A} \\
    \end{block}
  \end{blockarray}$$
where $A$ is a submatrix and $\beta$ and $\gamma$ are column vectors. Expanding along the first row, we can calculate the determinant:

$$
\begin{aligned}
\det M &= u \det \left ( \begin{array}{cc|c}
v & 0 & \bf{0} \\ \hline
\bf{0} & \gamma & \bf{A}
\end{array}\right )
-u \det \left ( \begin{array}{cc|c}
-1 & v & \bf{0} \\ \hline
\beta & \bf{0} & \bf{A}
\end{array}\right ) \\
& = uv \det\left ( \begin{array}{c|c}
\gamma & \bf{A}
\end{array}\right ) 
+ u \det \left ( \begin{array}{c|c}
\bf{0} & \bf{A}
\end{array}\right )
+ uv \det \left ( \begin{array}{c|c}
\beta & \bf{A}
\end{array}\right ) \\
& = uv \det \left ( \begin{array}{c|c}
\beta+\gamma & \bf{A}
\end{array}\right )
\end{aligned}
$$ \medskip

\noindent Now, notice that in the new diagram, the new strand, $a_1^*$, is involved in each crossing that the strands $a_1$ and $a_3$ were involved in. Therefore, the Alexander matrix $M'$ for $D'$ is:
$$M' = \begin{blockarray}{cc}
    a_1^* & \\
    \begin{block}{(c|c)}
    	\beta+\gamma & \bf{A} \\
    \end{block}
  \end{blockarray}$$
So as desired, $\det{M} = uv\det{M'}$. A similar argument holds for type (Ib), (Ic) and (Id) moves.
\end{proof}

Now we turn to the Reidemeister (II) move.  Again, we distinguish four types of Reidemeister (II) move, as shown in Figure \ref{F:r2alexander}. Again, we give the arcs involved in the move the first 6 labels in the diagram, with odd labels along one strand and even labels along the other (the choice of labels is made to simplify some of our later computations).

\begin{figure}[htbp]
\begin{center}
\scalebox{.8}{\includegraphics{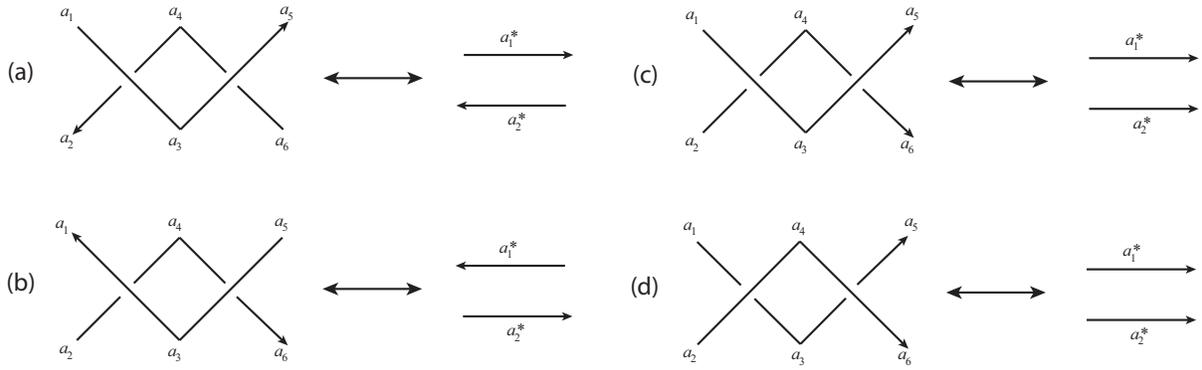}}
\end{center}
\caption{Four types of Reidemeister (II) moves. We assume $a_1, \dots, a_6$ are the first 6 arcs.}
\label{F:r2alexander}
\end{figure}

\begin{lem}\label{L:R2}
If $D$ is a virtual knot diagram, and $D'$ is the result of applying a Reidemeister (II) move to remove two crossings as in Figure \ref{F:r2alexander}, then $\Delta_0(D) = (-uv)\Delta_0(D')$ (for all four types of Reidemeister (II) move).
\end{lem}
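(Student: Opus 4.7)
The plan is to mirror the proof of Lemma \ref{L:R1}: first write out the Alexander relations at the two crossings removed by the R2 move, organize them as a block of the Alexander matrix $M$ of $D$, and then use row and column operations to isolate the factor $-uv$ and recognize the Alexander matrix $M'$ of $D'$ in what remains.

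First I would work out one case in detail, say type (IIa), and argue that the other three cases are analogous. The two R2 crossings give four linear relations among the six local arcs $a_1,\dots,a_6$. The key structural observation is that the two ``middle'' arcs $a_3$ and $a_4$ -- the ones that disappear after the move -- appear \emph{only} in these four relations, since they are bounded on both sides by the two R2 crossings. Consequently, if we write $M$ in block form with the first four rows corresponding to the two R2 crossings and the first six columns indexing $a_1,\dots,a_6$, the $a_3$ and $a_4$ columns vanish outside of the upper block, and the upper block has the shape $[B_1 \mid B_0 \mid B_2 \mid \mathbf{0}]$, where $B_0$ is the $4 \times 2$ sub-block in columns $a_3,a_4$, while in the lower block only the columns $a_1,a_2,a_5,a_6$ can be nonzero, with column vectors $\beta_1,\beta_2,\beta_5,\beta_6$.

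The next step is to row-reduce the upper block using $B_0$ as a pivot, thereby clearing $a_3$ and $a_4$ and expressing the remaining two relations as identifications between the outer arcs (namely $a_1 \sim a_5$ and $a_2 \sim a_6$, consistent with how the arcs merge in $D'$). After this reduction the determinant of $M$ factors as the $2\times 2$ pivot determinant of the nonzero part of $B_0$ times the determinant of the reduced lower matrix; the pivot determinant produces the factor $-uv$, with $u$ and $v$ coming from the leading coefficients in the two types of Alexander relation (as in the proof of Lemma \ref{L:R1}) and the sign coming from either a row swap or the minus sign in the relation of the form $-b + va = 0$. The reduced lower matrix then agrees with the Alexander matrix $M'$ of $D'$, since in $D'$ the merged arc $a_1^*$ appears at exactly the crossings where $a_1$ or $a_5$ appeared in $D$ (so its column is $\beta_1+\beta_5$), and likewise for $a_2^*$ with column $\beta_2+\beta_6$.

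The main obstacle is sign bookkeeping across the four cases: types (IIa)--(IId) differ by the signs of the two crossings and which strand is over/under, so the entries populating $B_0, B_1, B_2$ differ in each case, and one must verify that the row reduction still produces exactly $-uv$ in each one. I would organize this by computing the $2\times 2$ pivot determinant explicitly for each of the four cases and verifying uniformity of the result; this calculation should also make transparent the source of the discrepancy with Silver and Williams' original claim that R2 preserves $\Delta_0$.
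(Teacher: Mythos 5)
Your proposal follows essentially the same route as the paper's proof: the key structural observation that the middle arcs $a_3,a_4$ occur only in the four local relations (so their columns vanish in the lower block), followed by row operations and cofactor expansion that extract the scalar factor and leave a matrix with merged columns $\alpha+\gamma$ and $\beta+\delta$, which is exactly the Alexander matrix $M'$ of $D'$. One small bookkeeping caveat: the $2\times 2$ pivot on the $a_3,a_4$ columns alone yields only $\pm u$; the factor of $v$ arises from the subsequent elimination of the two leftover relation rows that identify $a_1$ with $a_5$ and $a_2$ with $a_6$ --- but this falls out of the explicit computation you propose to carry out.
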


\begin{proof}
We will prove the result for moves of type (IIa); the proofs for the others are similar.  The Alexander matrix $M$ for $D$ is:
$$M = \begin{blockarray}{ccccccc}
    a_1 & a_2 & a_3 & a_4 & a_5 & a_6 & \\
    \begin{block}{(cccccc|c@{\hspace*{5pt}})}
     1 & -1 & -u & u & 0 & 0 & {\bf 0} \\
     v & 0 & -1 & 0 & 0 & 0 & {\bf 0} \\
     0 & 0 & u & -u & -1 & 1 & {\bf 0} \\
     0 & 0 & -1 & 0 & v & 0 & {\bf 0} \\
  	\cline{1-7}
   {\bf \alpha} &  {\bf \beta} &  {\bf 0} &  {\bf 0} &  {\bf \gamma} &  {\bf \delta} & {\bf A} \\
    \end{block}
  \end{blockarray}$$
If we add the third row to the first and subtract the fourth row from the second, we get:

$$\det M = \det \begin{blockarray}{ccccccc}
    a_1 & a_2 & a_3 & a_4 & a_5 & a_6 & \\
    \begin{block}{(cccccc|c@{\hspace*{5pt}})}
     1 & -1 & 0& 0 & -1 & 1 & {\bf 0} \\
     v & 0 & 0 & 0 & -v & 0 & {\bf 0} \\
     0 & 0 & u & -u & -1 & 1 & {\bf 0} \\  
     0 & 0 & -1 & 0 & v & 0 & {\bf 0} \\
  	\cline{1-7}
   {\bf \alpha} &  {\bf \beta} &  {\bf 0} &  {\bf 0} &  {\bf \gamma} &  {\bf \delta} & {\bf A} \\
    \end{block}
  \end{blockarray}$$
Taking the cofactor expansion first along column $a_4$, and then along column $a_3$ gives:

$$\det M = (u)(-1) \det 
    \left( \begin{array}{cccc|c@{\hspace*{5pt}}}
     1 & -1 &  -1 & 1 & {\bf 0} \\
     v & 0 &  -v & 0 & {\bf 0} \\
  	\hline
   {\bf \alpha} &  {\bf \beta} &  {\bf \gamma} &  {\bf \delta} & {\bf A} \\
    \end{array} \right)
  $$
Now, expanding along the second row gives:
  
  $$\begin{aligned}
  \det M &= (uv) \det \left( 
      \begin{array}{ccc|c@{\hspace*{5pt}}}
     -1 &  -1 & 1 & {\bf 0} \\
     \hline
   {\bf \beta} &  {\bf \gamma} &  {\bf \delta} & {\bf A} \\
    \end{array} \right)
  -(uv)\det \left(
    \begin{array}{ccc|c@{\hspace*{5pt}}}
     1 &  -1 & 1 & {\bf 0} \\
    \hline
   {\bf \alpha} &  {\bf \beta} &  {\bf \delta} & {\bf A} \\
    \end{array} \right)\\
  &= (uv) \left((-1)\det \left( 
      \begin{array}{cc|c@{\hspace*{5pt}}}
    {\bf \gamma} &  {\bf \delta} & {\bf A} \\
    \end{array}\right)
  + \det \left( 
      \begin{array}{cc|c@{\hspace*{5pt}}}
    {\bf \beta} &  {\bf \delta} & {\bf A} \\
    \end{array} \right)
  + \det \left( 
      \begin{array}{cc|c@{\hspace*{5pt}}}
    {\bf \beta} &  {\bf \gamma} & {\bf A} \\
    \end{array} \right)\right)\\
  &\quad -(uv)\left(\det \left( 
      \begin{array}{cc|c@{\hspace*{5pt}}}
    {\bf \beta} &  {\bf \delta} & {\bf A} \\
    \end{array} \right)
  + \det\left( 
      \begin{array}{cc|c@{\hspace*{5pt}}}
    {\bf \alpha} &  {\bf \delta} & {\bf A} \\
    \end{array} \right)
  + \det \left( 
      \begin{array}{cc|c@{\hspace*{5pt}}}
    {\bf \alpha} &  {\bf \beta} & {\bf A} \\
    \end{array} \right)\right)\\
  &= (-uv)\left(\det \left( 
      \begin{array}{cc|c@{\hspace*{5pt}}}
          {\bf \alpha} &  {\bf \beta} & {\bf A} \\
    \end{array} \right)
    + \det \left( 
      \begin{array}{cc|c@{\hspace*{5pt}}}
    {\bf \alpha} &  {\bf \delta}  & {\bf A} \\
    \end{array} \right)
  + \det\left( 
      \begin{array}{cc|c@{\hspace*{5pt}}}
    {\bf \gamma} &  {\bf \beta} & {\bf A} \\
    \end{array} \right)
  + \det\left( 
      \begin{array}{cc|c@{\hspace*{5pt}}}
    {\bf \gamma} &  {\bf \delta} & {\bf A} \\
    \end{array} \right)\right)\\
  &= (-uv) \det\left( 
      \begin{array}{cc|c@{\hspace*{5pt}}}
    {\bf \alpha+\gamma} &  {\bf \beta+\delta} & {\bf A} \\
    \end{array} \right)
  \end{aligned}$$
In $D'$, arc $a_1^*$ is involved in each crossing that the arcs $a_1$ and $a_5$ were involved in in $D$, and similarly arc $a_2^*$ is involved in each crossing that the arcs $a_2$ and $a_6$ were involved in in $D$. Therefore, the Alexander matrix $M'$ for $D'$ is:

$$M' = \begin{blockarray}{ccc}
    a_1^*& a_2^*\\
    \begin{block}{(cc|c@{\hspace*{5pt}})}
    {\bf \alpha+\gamma} &  {\bf \beta+\delta} & {\bf A} \\
    \end{block}
  \end{blockarray}$$
So $\det M = (-uv)\det M'$, as desired. A similar argument shows that the same relation holds for the three other types of Reidemeister (II) moves.
\end{proof} \medskip

From Lemma \ref{L:R1} and Lemma \ref{L:R2}, two diagrams representing the same virtual knot can have Alexander polynomials which differ by a factor of $\pm(uv)^k$. Typically, when we talk about the Alexander polynomial for a \emph{knot}, we normalize it to remove the indeterminacy.  Following Silver and Williams, if $K$ is a virtual knot with diagram $D$, we define:
$$\Delta_0(K)(u,v) = (-1)^r(uv)^{-s}\Delta_0(D)(u,v)$$
where $s$ is the lowest power of $u$ in $\Delta_0(D)(u,v)$, and $(-1)^r$ is the sign of the term in $(uv)^{-s}\Delta_0(D)(u,v)$ with lowest total degree (if there are multiple terms with the same lowest total degree, we choose the one where $u$ has the lowest degree).

One of the most useful properties of the classical Alexander polynomial is that it satisfies a recursive {\it skein relation} that makes it much easier to compute.  Silver and Williams \cite{sw} prove a similar skein relation for $\Delta_0$, but it has an indeterminacy that makes it difficult to use.  In our next lemma, we remove this indeterminacy by considering diagrams rather than knots, and fixing a labeling for the arcs; this will enable us to use the skein relation more easily (at the cost of having to keep careful track of the labels on the arcs). Our proof is essentially the same as in \cite{sw}, but we include it for completeness.

\begin{figure}[htbp]
\begin{center}
\scalebox{.8}{\includegraphics{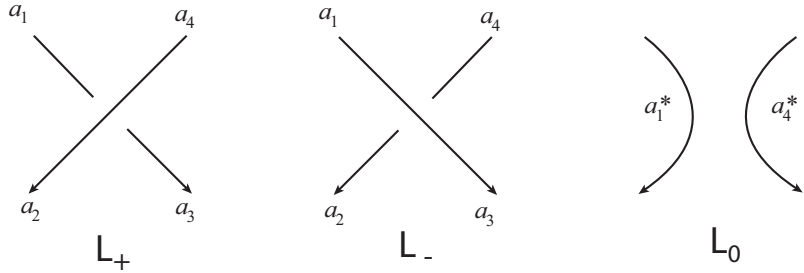}}
\end{center}
\caption{Diagrams in the skein relation of Lemma \ref{L:skein}.}
\label{F:skein}
\end{figure}

\begin{lem}\label{L:skein}
Given labeled diagrams $L_+$, $L_-$ and $L_0$ as shown in Figure \ref{F:skein}, we have
$$\Delta_0(L_+)-\Delta_0(L_-)=(uv-1)\Delta_0(L_0)$$
\end{lem}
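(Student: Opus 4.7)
The plan is to set up the three Alexander matrices $M_+$, $M_-$, $M_0$ side by side and compare them using generalized Laplace expansion together with multilinearity of the determinant in columns. Choose labels so that the four arcs incident to the distinguished crossing in $L_+$ and $L_-$ are $a_1,a_2,a_3,a_4$, ordered so that the oriented resolution giving $L_0$ merges $a_1$ with $a_4$ and $a_2$ with $a_3$; place the two relations coming from the distinguished crossing in rows $1,2$. Because those rows vanish in every column beyond the fourth, $M_\pm$ has block form
\[
M_\pm = \left(\begin{array}{c|c} R_\pm & \mathbf 0 \\ \hline C & B \end{array}\right),
\]
with $R_\pm$ the $2\times 4$ block of crossing coefficients read off from Figure~\ref{F:alexander}, and the lower block $(C\mid B)$ (of size $(2n-2)\times 2n$) identical for $L_+$ and $L_-$. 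The matrix $M_0$ is obtained from the same lower block by replacing the first four columns $C_1,C_2,C_3,C_4$ by the two merged columns $C_1+C_4$ and $C_2+C_3$, yielding a $(2n-2)\times(2n-2)$ matrix.

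Expanding $\det M_0$ by multilinearity in its first two columns gives
\[
\det M_0 \;=\; N_{12}+N_{13}+N_{42}+N_{43}, \qquad N_{ij}:=\det(C_i\mid C_j\mid B),
\]
which can be rewritten as $N_{12}+N_{13}-N_{24}-N_{34}$ (reordering reverses sign). Simultaneously, apply generalized Laplace expansion along rows $1,2$ of $M_\pm$; only the six pairs $\{i,j\}\subset\{1,2,3,4\}$ contribute, producing
\[
\det M_\pm \;=\; \sum_{1\le i<j\le 4}(-1)^{i+j+1}\,\det R_\pm^{\{i,j\}}\cdot \widetilde N_{ij},
\]
where $\widetilde N_{ij}$ is the $(2n-2)\times(2n-2)$ minor of $(C\mid B)$ obtained by deleting columns $i$ and $j$ of $C$, and each $\widetilde N_{ij}$ agrees (up to a reordering sign) with $N_{kl}$ for $\{k,l\}=\{1,2,3,4\}\setminus\{i,j\}$. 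Subtracting turns the problem into evaluating the six differences $\Delta_{ij}:=\det R_+^{\{i,j\}}-\det R_-^{\{i,j\}}$, a purely $2\times 2$ calculation using the explicit entries of $R_\pm$. I expect four of these to equal $\pm(uv-1)$ and two to vanish, in precisely the pattern that makes
$\det M_+-\det M_-$ collapse to $(uv-1)\,\det M_0$; dividing both sides by the normalization factors that convert $\Delta_0$ of a diagram to $\Delta_0$ of a labeled diagram yields the stated skein relation.

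The hard part will be the sign and orientation bookkeeping: making sure the two vanishing minor differences are exactly the ones paired (via the Laplace signs $(-1)^{i+j+1}$ and the reordering signs that relate $\widetilde N_{ij}$ to $N_{kl}$) with the complementary pairs $\{2,3\}$ and $\{1,4\}$ \emph{not} appearing in the oriented-resolution expansion of $\det M_0$, and that the remaining four signed contributions line up term-for-term with $(uv-1)(N_{12}+N_{13}-N_{24}-N_{34})$. Everything else — the block decomposition, the multilinearity expansion, and the $2\times 2$ minor computations — is routine once the orientation of the crossing and the convention for the oriented smoothing have been fixed consistently.
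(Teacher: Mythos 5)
Your overall strategy is the same as the paper's: put the two relations from the distinguished crossing in the top rows, observe that the lower block of $M_+$ and $M_-$ is identical, expand both determinants over the first four columns (the paper does this by iterated cofactor expansion along the second row; your generalized Laplace expansion along rows $1,2$ is equivalent and a little cleaner), expand $\det M_0$ by multilinearity in the two merged columns, and match terms. With the paper's conventions the numbers do come out as you predict: writing $R_+=\left(\begin{smallmatrix}1&-1&-u&u\\0&v&0&-1\end{smallmatrix}\right)$ and $R_-=\left(\begin{smallmatrix}1&-1&-u&u\\v&0&-1&0\end{smallmatrix}\right)$, the differences of $2\times 2$ minors vanish on the column pairs $\{1,2\}$ and $\{3,4\}$ and equal $\pm(uv-1)$ on the other four pairs, and the Laplace signs conspire to give $\det M_+-\det M_-=(uv-1)\bigl(N_{13}+N_{14}+N_{23}+N_{24}\bigr)$.

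There are, however, two problems. First, you have the oriented smoothing merging the wrong arcs: in the paper's figure the resolution joins $a_1$ with $a_2$ and $a_3$ with $a_4$, so the merged columns of $M_0$ are $C_1+C_2$ and $C_3+C_4$ and $\det M_0=N_{13}+N_{14}+N_{23}+N_{24}$, which is exactly what the skein difference produces. Under your pairing ($C_1+C_4$ and $C_2+C_3$) one gets instead $\det M_0=N_{12}+N_{13}-N_{24}-N_{34}$, which contains the complementary minors $N_{12}$ and $N_{34}$ of the two \emph{vanishing} differences and omits $N_{14}$ and $N_{23}$ --- so with that convention the identity would actually fail, not merely pick up a sign. (Relatedly, the complements of the vanishing pairs $\{1,2\}$ and $\{3,4\}$ are $\{3,4\}$ and $\{1,2\}$, not $\{2,3\}$ and $\{1,4\}$ as you wrote.) Second, the decisive step --- computing the six $2\times 2$ minor differences and checking that the Laplace and reordering signs line up with the expansion of $\det M_0$ --- is the entire content of the lemma, and you defer it with ``I expect\dots''. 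The plan is sound and matches the paper's, but until that computation is carried out with the correct smoothing convention the proof is not complete. A minor point: the lemma is stated for labeled diagrams, for which $\Delta_0$ is literally the determinant of the Alexander matrix, so no final normalization step is needed.
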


\begin{proof}
For $L_+$, the Alexander matrix $M_+$ is:

$$M_+ = \begin{blockarray}{ccccc}
    a_1& a_2 & a_3 & a_4 \\
    \begin{block}{(cccc|c@{\hspace*{5pt}})}
	1 & -1 & -u & u & {\bf 0} \\
	0 & v & 0 & -1  & {\bf 0} \\
	\cline{1-5}
	\alpha & \beta & \gamma & \delta & {\bf A} \\
    \end{block}
  \end{blockarray}$$
Taking the cofactor expansion along the second row, we get:

$$\begin{aligned}
\det M_+ &= v \det \begin{blockarray}{cccc}
    a_1& a_3 & a_4 \\
    \begin{block}{(ccc|c@{\hspace*{5pt}})}
	1 & -u & u & {\bf 0} \\
	\cline{1-4}
	\alpha & \gamma & \delta & {\bf A} \\
    \end{block}
  \end{blockarray}
  - \det\begin{blockarray}{cccc}
    a_1& a_2 & a_4 \\
    \begin{block}{(ccc|c@{\hspace*{5pt}})}
	1 & -1 & -u & {\bf 0} \\
	\cline{1-4}
	\alpha & \beta & \gamma & {\bf A} \\
    \end{block}
  \end{blockarray} \\
  &=v\det \left[\gamma\,\delta\,{\bf A}\right]+uv\det \left[\alpha\,\delta\,{\bf A}\right]+uv\det \left[\alpha\,\gamma\,{\bf A}\right] - \det\left[ \beta\,\gamma\,{\bf A}\right] - \det \left[\alpha\,\gamma\,{\bf A}\right] + u\det \left[\alpha\,\beta\,{\bf A}\right]
  \end{aligned}
$$
For $L_-$, the Alexander matrix $M_-$ is:

$$M_- = \begin{blockarray}{ccccc}
    a_1& a_2 & a_3 & a_4 \\
    \begin{block}{(cccc|c@{\hspace*{5pt}})}
	1 & -1 & -u & u & {\bf 0} \\
	v & 0 & -1 & 0 & {\bf 0} \\
	\cline{1-5}
	\alpha & \beta & \gamma & \delta & {\bf A} \\
    \end{block}
  \end{blockarray}$$
Taking the cofactor expansion along the second row, as with $M_+$, we get:

$$\begin{aligned}
\det M_- &= -v \det \begin{blockarray}{cccc}
    a_2 & a_3 & a_4 \\
    \begin{block}{(ccc|c@{\hspace*{5pt}})}
	-1 & -u & u & {\bf 0} \\
	\cline{1-4}
	\beta & \gamma & \delta & {\bf A} \\
    \end{block}
  \end{blockarray}
  + \det \begin{blockarray}{cccc}
    a_1& a_2 & a_4\\
    \begin{block}{(ccc|c@{\hspace*{5pt}})}
	1 & -1 & u & {\bf 0} \\
	\cline{1-4}
	\alpha & \beta & \delta & {\bf A} \\
    \end{block}
  \end{blockarray} \\
  &= v\det\left[\gamma\,\delta\,{\bf A}\right]-uv\det\left[\beta\,\delta\,{\bf A}\right]-uv\det\left[\beta\,\gamma\,{\bf A}\right]+\det\left[\beta\,\delta\,{\bf A}\right] + \det\left[\alpha\,\delta\,{\bf A}\right] + u\det\left[\alpha\,\beta\,{\bf A}\right]
  \end{aligned}
$$
Then:
\begin{align*}
\Delta_0(L_+)-\Delta_0(L_-)&= \det M_+ - \det M_-\\
&= v\det \left[\gamma\,\delta\,{\bf A}\right]+uv\det \left[\alpha\,\delta\,{\bf A}\right]+uv\det \left[\alpha\,\gamma\,{\bf A}\right] - \det\left[ \beta\,\gamma\,{\bf A}\right] - \det \left[\alpha\,\gamma\,{\bf A}\right] + u\det \left[\alpha\,\beta\,{\bf A}\right] \\
&\quad - v\det\left[\gamma\,\delta\,{\bf A}\right] + uv\det\left[\beta\,\delta\,{\bf A}\right] + uv\det\left[\beta\,\gamma\,{\bf A}\right] - \det\left[\beta\,\delta\,{\bf A}\right] - \det\left[\alpha\,\delta\,{\bf A}\right] - u\det\left[\alpha\,\beta\,{\bf A}\right] \\
&= uv\det\left[\alpha\,\delta\,{\bf A}\right] + uv\det\left[\alpha\,\gamma\,{\bf A}\right] - \det\left[\beta\,\gamma\,{\bf A}\right] - \det\left[\alpha\,\gamma\,{\bf A}\right] \\
&\quad + uv\det\left[\beta\,\delta\,{\bf A}\right] + uv\det\left[\beta\,\gamma\,{\bf A}\right] - \det\left[\beta\,\delta\,{\bf A}\right] - \det\left[\alpha\,\delta\,{\bf A}\right] \\
&= (uv-1)\left(\det\left[\alpha\,\gamma\,{\bf A}\right] + \det\left[\alpha\,\delta\,{\bf A}\right] + \det\left[\beta\,\gamma\,{\bf A}\right] + \det\left[\beta\,\delta\,{\bf A}\right]\right)
\end{align*}
In $L_0$, the arc $a_1^*$ is involved in each crossing that the arcs $a_1$ and $a_2$ were involved in, and similarly the arc $a_4^*$ is involved in each crossing that the arcs $a_3$ and $a_4$ were involved in. Therefore, the Alexander matrix $M_0$ for $L_0$ is:
$$M_0 = \begin{blockarray}{ccc}
    a_1^* & a_4^* \\
    \begin{block}{(ccc@{\hspace*{5pt}})}
    \alpha+\beta & \gamma+\delta & {\bf A} \\
    \end{block}
  \end{blockarray}
$$
So:
\begin{align*}
\Delta_0(L_0) = \det M_0 = \det\left[\alpha+\beta\ \ \gamma+\delta\ \ {\bf A}\right] &= \det\left[\alpha\ \ \gamma+\delta\ \ {\bf A}\right] + \det\left[\beta\ \ \gamma+\delta\ \ {\bf A}\right]\\
&= \det\left[\alpha\ \gamma\ {\bf A}\right] + \det\left[\alpha\ \delta\ {\bf A}\right] + \det\left[\beta\ \gamma\ {\bf A}\right] + \det\left[\beta\ \delta\ {\bf A}\right]
\end{align*}
So as desired, $\Delta_0(L_+)-\Delta_0(L_-)=(uv-1)\Delta_0(L_0)$.
\end{proof}

Finally, we recall two more propositions due to Silver and Williams.

\begin{prop}\label{P:factor}\cite{sw}
Let $L$ be an oriented virtual link. \begin{enumerate}
	\item $(u-1)(v-1)$ divides $\Delta_0(L)(u,v)$.
	\item If $L$ is a knot, then $uv-1$ divides $\Delta_0(L)(u,v)$.
\end{enumerate}
\end{prop}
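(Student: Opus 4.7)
The plan is to prove each divisibility in Proposition~\ref{P:factor} by evaluating $\Delta_0(L)(u,v)$ at an appropriate specialization and exhibiting a nonzero vector in the kernel of the specialized Alexander matrix.

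For $(v-1)\mid\Delta_0(L)$ the verification is clean. Reading off the two rows contributed to the Alexander matrix by each classical crossing (as in the explicit rows from the proof of Lemma~\ref{L:R2}), every crossing contributes one four-term row whose coefficient-sum is identically zero and one two-term row of the form $v\,a_j-a_k$ whose coefficient-sum is $v-1$. Setting $v=1$ kills every row sum, so the all-ones column vector lies in the kernel of the Alexander matrix and $\Delta_0(L)(u,1)=0$. To then prove $(u-1)\mid\Delta_0(L)$ I would specialize $u=1$ and look for a kernel vector of the form $a_i=v^{h(a_i)}$, where $h$ is a $\mathbb{Z}$-valued function on arcs. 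The two-term relations $v\,a_j-a_k=0$ at $u=1$ impose an increment rule $h(a_k)=h(a_j)+1$ between specific pairs of arcs at each crossing; once this rule is imposed, the four-term relations at $u=1$ reduce to telescoping identities and are automatically satisfied. Producing $h$ globally then reduces to a consistency check: walking around each component of $L$ and accumulating the prescribed shifts, the total shift must vanish, and the conventions can be arranged so that the contributions at each crossing cancel. Since $(u-1)$ and $(v-1)$ are coprime in $\mathbb{Z}[u^{\pm 1},v^{\pm 1}]$, combining the two divisibilities gives $(u-1)(v-1)\mid\Delta_0(L)$.

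For part~(2), assume $L$ is a knot and specialize $v=u^{-1}$. The two-term relation $v\,a_j-a_k=0$ becomes $a_k=u^{-1}a_j$, and one searches for a kernel vector $a_i=u^{g(a_i)}$. After substituting the two-term relations into the four-term relations at $v=u^{-1}$, the latter also reduce to monomial shifts in $u$ consistent with the same data. For $g$ to be well-defined one must check that the accumulated shifts around the knot cancel, and this is precisely where the single-component hypothesis enters: in a knot every classical crossing is a self-crossing and is traversed exactly twice, once as an overcrossing and once as an undercrossing, and the contributions to $g$ from these two traversals can be arranged to have opposite signs. Hence $\Delta_0(L)(u,u^{-1})=0$ and $(uv-1)\mid\Delta_0(L)$.

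The main obstacle throughout is the bookkeeping at each crossing: one must fix conventions for which of the four incident arcs plays each role (in-over, out-over, in-under, out-under), handle positive and negative crossings with matching sign conventions, and verify that the prescribed shifts of $h$ and $g$ around any closed loop actually cancel. The knot hypothesis in part~(2) is essential, since for a multi-component link the shifts around distinct components need not cancel, an obstruction analogous to classical linking numbers.
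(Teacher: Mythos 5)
The paper does not actually prove this proposition --- it is quoted from Silver and Williams \cite{sw} --- so there is no internal proof to compare against and I am judging your argument on its own terms. Your argument for $(v-1)\mid\Delta_0(L)$ is correct and matches the matrices written out in the paper: every classical crossing contributes one row with coefficient sum identically $0$ and one row with coefficient sum $v-1$, so at $v=1$ the all-ones vector lies in the kernel and the determinant vanishes.

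The argument for $(u-1)\mid\Delta_0(L)$ has a genuine gap: the monomial kernel vector $a_i = v^{h(a_i)}$ you posit at $u=1$ does not exist in general, and the claim that the four-term relations become ``telescoping identities\dots automatically satisfied'' is false. The two-term relation constrains only the two arcs of one strand at the crossing; after substituting it, the four-term relation at $u=1$ still leaves a three-term condition such as $a_{\mathrm{out}} = a_{\mathrm{in}} + (v-1)a'_{\mathrm{in}}$ tying the two strands together, and forcing this to be monomial imposes extra exponent equalities that are globally inconsistent. You can see the failure already in the simplest case: for $VT(1)$ the Alexander matrix is written out explicitly in the proof of Theorem \ref{T:base} (the $m=1$ case), and at $u=1$ it becomes
$$\begin{pmatrix} -1 & 1 & 1 & -1 \\ v & -1 & 0 & 0 \\ 1 & -1 & -1 & 1 \\ 0 & v & 0 & -1 \end{pmatrix},$$
which has rank $3$ and one-dimensional kernel spanned by $\left(1,\ v,\ 1-v+v^2,\ v^2\right)$. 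The entry $1-v+v^2$ is not a monomial, so no kernel vector of your proposed form exists even here. The divisibility is true, but it requires either allowing (and actually constructing) non-monomial kernel vectors or a different mechanism entirely, e.g.\ the module-theoretic argument of \cite{sw}. Since $(u-1)$ is half of part (1), the proposal as written does not establish part (1).

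Part (2) is in better shape: at $v=u^{-1}$ the four-term relation really does collapse, after substituting the two-term relation, to a pure monomial shift on the second strand (for $VT(1)$ the kernel at $v=u^{-1}$ is spanned by $(u,1,1,u^{-1})$, which is monomial). But your consistency argument is asserted rather than proved: the shifts are forced by the relations, not ``arranged,'' and the cancellation around the knot comes from the two strands at each crossing receiving opposite shifts --- a fact you would need to verify from the crossing conventions of Figure \ref{F:alexander} for both crossing types and both signs --- rather than from anything about over- versus undercrossings per se. So part (2) is a plausible outline with the key verification missing, while the $(u-1)$ half of part (1) needs a genuinely different idea.
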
 \medskip

\noindent So if $K$ is a knot, $(u-1)(v-1)(uv-1)$ divides $\Delta_0(K)(u,v)$.  We will let $\overline{\Delta}_0(K)(u,v)$ denote the quotient, so
$$\Delta_0(K)(u,v) = (u-1)(v-1)(uv-1)\overline{\Delta}_0(K)(u,v).$$

\begin{prop}\label{P:reverse} \cite{sw}
Given a diagram $D$ of a virtual knot $K$, let $D^\#$ be the result of switching every (classical) crossing of $D$, $D^*$ be the reflection across a vertical line in the plane of the diagram, and $-D$ the result of reversing all orientations.  Let $K^\#$, $K^*$ and $-K$ be the corresponding virtual links.  Then for all $i \geq 0$,
\begin{enumerate}
	\item $\Delta_i(K^\#)(u,v) = -\Delta_i(K)(v,u)$
	\item $\Delta_i(K^*)(u,v) = \Delta_i(K)(u^{-1}, v^{-1})$
	\item $\Delta_i(-K)(u,v) = -\Delta_i(K)(u^{-1}, v^{-1})$
\end{enumerate}
\end{prop}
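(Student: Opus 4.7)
The plan is to reduce all three identities to a direct analysis of how the local relations at a classical crossing (Figure \ref{F:alexander}) change under the three geometric operations. For $i=0$ these identities can be read off from the Alexander matrix itself, and since the higher $\Delta_i$ are Fitting ideal generators for the cokernel of this matrix, any substitution of variables or unit-scalar modification of the matrix descends to the corresponding substitution (together with a unit factor) on every $\Delta_i$. Thus I would focus on $\Delta_0$, which is determined up to $\pm(uv)^k$ by Lemmas \ref{L:R1} and \ref{L:R2}; the $\pm(uv)^k$ ambiguity is absorbed by the knot normalization.

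For part (1), a glance at the two panels of Figure \ref{F:alexander} shows that the pair of relations at a negative crossing is obtained from the pair at a positive crossing by interchanging the roles of $u$ and $v$ (together with a reassignment of which arc appears where). Consequently, the Alexander matrix for $D^\#$ is obtained from that for $D$ by the substitution $u \leftrightarrow v$ followed by a permutation of rows and columns whose net sign I expect to be $-1$; this would be the origin of the minus sign in the statement. I would verify the overall sign against a single small example (e.g.\ the trefoil) to pin down the convention.

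For parts (2) and (3), the same local-relation approach applies. Reflection across a vertical line preserves which strand is the overstrand at each crossing but reverses horizontal orientations; after clearing a power of $uv$ from each pair of rows, the reflected relations should coincide with the originals under $u \mapsto u^{-1},\ v \mapsto v^{-1}$, and the extracted power of $uv$ is absorbed in normalization, yielding (2). Reversing all orientations does not change any crossing sign (these depend on relative orientations) but swaps incoming with outgoing arcs at every crossing; the resulting relations match the originals under the same inversion $u \mapsto u^{-1},\ v \mapsto v^{-1}$ together with a single row-pair swap analogous to (1), producing both the variable inversion and the minus sign of part (3). The main obstacle is purely combinatorial sign-tracking: one must commit once and for all to a convention for arc labeling and for the row ordering at each crossing, then carefully count how each operation permutes these choices, so as to confirm that the parities really are $-1$ (for (1) and (3)) and $+1$ (for (2)). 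Since the arithmetic is routine once the convention is fixed, this is the part of the argument most prone to bookkeeping errors but presents no conceptual difficulty.
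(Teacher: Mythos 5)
The paper does not prove this proposition at all --- it is quoted verbatim from Silver and Williams \cite{sw} --- so there is no in-paper argument to compare yours against; I can only assess your sketch on its own terms, and it has a genuine gap at its core. Your proof of part (1) rests on the claim that the pair of relations at a negative crossing is obtained from the pair at a positive crossing by interchanging $u$ and $v$ (plus a relabeling of arcs). This is contradicted by the matrices the paper itself writes down: in the proof of Lemma \ref{L:skein}, the positive crossing contributes the rows $(1,-1,-u,u)$ and $(0,v,0,-1)$, while the negative crossing contributes $(1,-1,-u,u)$ and $(v,0,-1,0)$. Both crossings carry $u$ in the first relation and $v$ in the second; switching the crossing merely moves the $v$-relation from one strand to the other and leaves the $u$-row untouched. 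No column permutation can convert entries $\pm u$ into entries $\pm v$, so the Alexander matrix of $D^\#$ is \emph{not} obtained from that of $D$ by the substitution $u\leftrightarrow v$ followed by a signed permutation of rows and columns. The $u\leftrightarrow v$ symmetry of part (1) is an intrinsically global phenomenon (in the Silver--Williams picture it reflects exchanging the two boundary components of the thickened surface supporting the virtual link, i.e.\ the two ``meridian directions'' that $u$ and $v$ encode), and it cannot be read off crossing-by-crossing the way you propose. Since you derive the minus sign in part (3) ``analogously to (1),'' that argument inherits the same defect.

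Two smaller points. First, your plan to ``pin down the convention'' by testing on the trefoil cannot work: $\Delta_0$ vanishes identically on classical knots (as the introduction notes), so a classical example carries no sign information; you would need a genuinely virtual example such as the virtual Hopf links or $VT(1,1)$. Second, for part (2) you assert that reflection across a vertical line ``preserves which strand is the overstrand''; that is true, but such a reflection also reverses the sign of every crossing (it negates the determinant of the pair of tangent vectors), so the local relation actually changes type under reflection, and your bookkeeping must account for that before the substitution $u\mapsto u^{-1}$, $v\mapsto v^{-1}$ can be verified. The overall strategy --- reduce to $\Delta_0$, absorb units via the normalization, push the substitution through the higher Fitting ideals --- is reasonable in outline, but the local computation that is supposed to drive it is not correct as stated.
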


\section{Virtual Twist Knots}

We define the {\it virtual twist knot} $VT(a_1,\dots,a_n)$ as shown in Figure \ref{F:virtualtwist}.  As compared to a classical twist knot, one of the crossings in the top clasp has been made virtual, and the crossings in the ``twist" have been divided into $n$ blocks of classical half-twists, each separated by a single virtual crossing, where the $i$th block consists of $a_i$ half-twists.  The knot is oriented as shown in Figure \ref{F:virtualtwist}.  If $a_i$ is positive, the crossings in block $i$ have positive sign; if $a_i$ is negative the crossings have negative sign.

\begin{figure}[htbp]
\begin{center}
\scalebox{.8}{\includegraphics{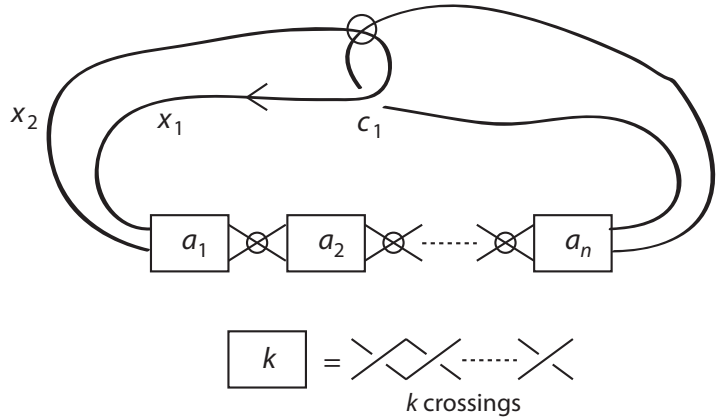}}
\end{center}
\caption{The virtual twist knot $VT(a_1, \dots, a_n)$.}
\label{F:virtualtwist}
\end{figure}

For the purposes of computing the Alexander polynomial, we will fix a labeling of the arcs and crossings of the virtual twist knot.  We label the crossing in the clasp $c_1$ (as in Figure \ref{F:virtualtwist}), and label the other crossings $c_2, c_3, \dots$ from left to right in the twist.  The two arcs to the left of the twist are labeled $x_1$ and $x_2$ as in Figure \ref{F:virtualtwist}.  The arcs along the strand oriented from left to right are given odd labels $x_1, x_3, x_5$, etc., while the arcs along the strand oriented from right to left are given even labels $x_2, x_4, x_6,$ etc. (the subscripts still increase from left to right).  So at crossing $c_{i+1}$, the orientations and labels match one of the two types in Figure \ref{F:crossing} (here they are shown for positive crossings; changing the sign of the crossing does not change the type).  Note that the crossings along the twist alternate between the two types, unless there is a virtual crossing in between.

\begin{figure}[htbp]
\begin{center}
\scalebox{.8}{\includegraphics{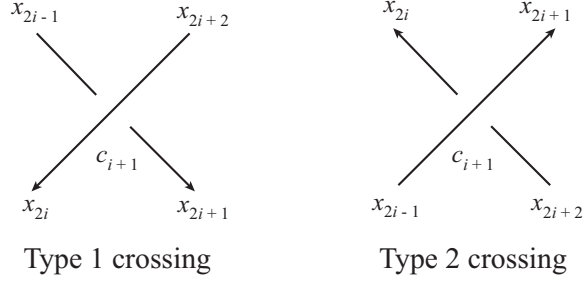}}
\end{center}
\caption{The two types of (positive) crossings in the twist.}
\label{F:crossing}
\end{figure}

\section{Alexander polynomial for Virtual Twist Knots: Special Cases}\label{S:base}

Our goal is to compute the Alexander polynomial $\Delta_0$ for the virtual twist knots. In this section, we will find closed formulas for $\Delta_0$ for four special infinite families of virtual twist knots.  These will be the base cases for the recursive formula we derive in the next section.

\begin{thm}\label{T:base}
Given a sequence of $m$ ones, then:
	\begin{itemize}
		\item[] $\begin{aligned} \Delta_0(VT(1,\dots,1)) &=-1 + u^m + v - u^{m+1}v - u^{m}v^{m+1} + u^{m+1}v^{m+1}\\
		&=(u-1)(v-1)(uv-1)\sum_{i=0}^{m-1}\sum_{j=i}^{m-1}v^{i}u^{j} \end{aligned}$
		\item[] $\begin{aligned} \Delta_0(VT(0,1,\dots,1)) &=(-1)^{m+1}(v - uv - v^{m} + u^{m}v^{m} + uv^{m+1} - u^{m}v^{m+1})\\
		&=(-1)^{m}v(u-1)(v-1)(uv-1)\sum_{i=0}^{m-2}\sum_{j=i}^{m-2}u^{i}v^{j} {\rm \ (or\ 0\ if\ }m\leq1) \end{aligned}$
		\item[] $\begin{aligned} \Delta_0(VT(1,\dots,1,0)) &=-u + u^{m} + uv - u^{m+1}v - u^{m}v^{m} + u^{m+1}v^{m}\\
		&=u(u-1)(v-1)(uv-1)\sum_{i=0}^{m-2}\sum_{j=i}^{m-2}v^{i}u^{j} {\rm \ (or\ 0\ if\ }m\leq1) \end{aligned}$
		\item[] $\begin{aligned} \Delta_0(VT(0,1,\dots,1,0)) &=(-1)^{m+1}(1 - u - v^{m} + u^{m+1}v^{m} + uv^{m+1} - u^{m+1}v^{m+1})\\
		&= (-1)^{m}(u-1)(v-1)(uv-1)\sum_{i=0}^{m-1}\sum_{j=i}^{m-1}u^{i}v^{j} {\rm \ (or\ 0\ if\ }m=0) \end{aligned}$
	\end{itemize}
Here, we are computing the polynomials for the particular \emph{diagram} shown in Figure \ref{F:virtualtwist}; they are not yet normalized to give the polynomial for the knot.
\end{thm}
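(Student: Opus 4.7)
The plan is to prove all four formulas by induction on $m$, after writing down the Alexander matrix for each diagram using the arc-labeling convention fixed in Section 3. The clasp crossing $c_1$ contributes two rows involving the arcs $x_1, x_2$ and the two arcs at the top of the twist, and each classical crossing $c_{i+1}$ in the twist contributes two rows that involve the four arcs meeting at that crossing. Because the twist crossings alternate between the two types of Figure \ref{F:crossing} wherever two consecutive entries of the sequence are nonzero, the Alexander matrix $M_m$ has a predictable block-tridiagonal pattern: each pair of rows shares exactly two non-zero columns with the next pair. A $0$-entry in the defining sequence simply inserts an extra virtual crossing between two adjacent blocks, which swaps the arc labels entering the next classical crossing and therefore only shifts the positions of a pair of $\pm 1$ entries in the corresponding two rows of $M_m$.

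For $VT(1,\dots,1)$ with $m$ ones, I would compute $\det M_m$ by cofactor-expanding along the last pair of rows (or equivalently by row-reducing the bottom blocks into the ones above them), writing the result as an explicit polynomial factor, depending on the type of the final twist crossing, times $\det M_{m-1}$.  Combined with a small base case ($m=1$, which is just a $4 \times 4$ determinant), this yields the polynomial formula by induction. The other three families are handled in exactly the same way: the $0$'s at the beginning and/or end of the defining sequence change only the two boundary rows of $M_m$, so the interior inductive step is identical and only the initial matrix and the base cases change.  The factored forms can then be verified by expanding $(u-1)(v-1)(uv-1)\sum_{i \leq j} v^i u^j$ (and its analogues) and matching the result term by term against the polynomial expression by a short telescoping argument; alternatively, one can verify the factored form directly by induction, since the inductive recurrence for $\det M_m$ is linear in $\det M_{m-1}$.

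The main obstacle is the bookkeeping. The four diagrams differ only in their boundary behaviour, but each imposes a slightly different pattern of $\pm 1$'s in the first and last pairs of rows of $M_m$, and the signs produced by the cofactor expansions must be tracked consistently. In particular, the theorem computes $\Delta_0$ of a specific diagram, so we cannot yet discard overall factors of $\pm(uv)^k$ via the normalization described after Lemma \ref{L:R2}. Once the block-tridiagonal reduction is set up carefully for one case, however, each inductive step reduces to evaluating a fixed $4 \times 4$ determinant, and the closed forms follow immediately.
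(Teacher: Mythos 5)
Your proposal has a genuine structural gap: the Alexander matrix of $VT(1,\dots,1)$ is \emph{not} block-tridiagonal. The two rows coming from the clasp crossing involve the arcs $x_1,x_2$ \emph{and} the arcs $x_{2m+1},x_{2m+2}$ at the far end of the twist, so the matrix is block-tridiagonal plus a nonzero $2\times 2$ corner block in the first block row and \emph{last} block column. This wrap-around block defeats the induction you describe. Concretely, the claim that $\det M_m$ equals an explicit polynomial factor times $\det M_{m-1}$ is false: writing $\det M_m=(u-1)(v-1)(uv-1)Q_m$ with $Q_2=1+u+uv$ and $Q_3=1+u+u^2+uv+u^2v+u^2v^2$, one checks (e.g.\ at $u=2,v=3$, where $Q_3/Q_2=61/9$) that $Q_2\nmid Q_3$. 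The reason the expansion does not close up is that the Laplace expansion along the last pair of rows deletes columns from the last two block columns, and the resulting complementary minors carry the corner block $D$ and the subdiagonal block $A$ in \emph{different} block columns; these minors are not Alexander matrices of $VT(1,\dots,1)$ with $m-1$ ones, so the induction hypothesis does not apply to them. A correct inductive scheme would have to enlarge the family of matrices being tracked (e.g.\ keep the corner block as a variable $2\times 2$ matrix and run a vector-valued recursion), which you have not set up.

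The paper avoids this by working with the corner block directly: it performs block \emph{column} operations (right-multiplication by elementary block matrices $E(i,-B^{-1}A)$) that sweep the subdiagonal blocks into the corner, reducing the determinant to $u^m\det\bigl(C+(-1)^mD(B^{-1}A)^m\bigr)$, and then computes $(B^{-1}A)^m$ in closed form by diagonalizing $B^{-1}A$. If you want to salvage your approach, the minimal fix is to prove by induction the stronger statement that after clearing the first $k$ subdiagonal blocks the corner entry is $C+(-1)^kD(B^{-1}A)^k$ --- but at that point you have reproduced the paper's argument. Your remarks about the $0$-blocks only perturbing boundary rows, and about verifying the factored forms by telescoping, are fine once the core determinant computation is repaired.
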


\begin{proof}
We will prove the theorem for $VT(1,\dots,1)$; the proofs for the other cases are similar.  Define the following matrices:
$$
A=\begin{pmatrix}
1 & -1 \\
0 & v
\end{pmatrix} \quad 
B=\begin{pmatrix}
-u & u \\
0 & -1
\end{pmatrix} \quad
C=\begin{pmatrix}
-1 & u \\
v & -1
\end{pmatrix} \quad
D=\begin{pmatrix}
1 & -u \\
0 & 0
\end{pmatrix}
$$
In $VT(1, \dots, 1)$, the real and virtual crossings alternate along the twist, starting and ending with a real crossing. So at each crossing in the twist, the ``odd" strand is coming down under the ``even" strand.  Hence, excluding the clasp, every crossing will be Type 1 (see Figure \ref{F:crossing}).  So at crossing $c_{i+1}$ we get the submatrix:
$$
\begin{blockarray}{cccc}
    x_{2i-1}& x_{2i} & x_{2i+1} & x_{2i+2} \\
    \begin{block}{(cccc@{\hspace*{5pt}})}
    {1} &  {-1} & {-u} & {u} \\
    {0} &  {v} & {0} & {-1} \\
    \end{block}
  \end{blockarray}=\begin{bmatrix}
\bf{A} & \bf{B} 
\end{bmatrix}
$$
Since the final classical crossing is \emph{not} followed by a virtual crossing, the ``odd" strand leaves the twist as the bottom strand, and the crossing at the clasp will be positive.  So the crossing at the clasp looks like:
$$\scalebox{.8}{\includegraphics{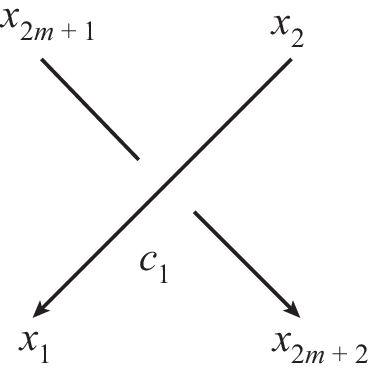}}$$
which gives rise to the submatrix:
$$
\begin{blockarray}{ccccccc}
    x_{1}& x_{2} & & \dots & & x_{2m+1} & x_{2m+2} \\
    \begin{block}{(ccccccc@{\hspace*{5pt}})}
    {-1} &  {u} & 0 & \dots & 0 & {1} & {-u} \\
    {v} &  {-1} & 0 & \dots & 0 & {0} & {0} \\
    \end{block}
  \end{blockarray}=\begin{bmatrix}
\bf{C} & \bf{0} & \cdots & \bf{0} & \bf{D} 
\end{bmatrix}
$$
This gives us the following Alexander matrix for $VT(1,\dots,1)$:
$$
M=\begin{pmatrix}
\mathbf{C} & \mathbf{0} & \mathbf{0} & \mathbf{0} & \mathbf{0} & \cdots & \mathbf{D}\\
\mathbf{A} & \mathbf{B} & \mathbf{0} & \mathbf{0} & \mathbf{0} \\
\mathbf{0} & \mathbf{A} & \mathbf{B} & \mathbf{0} & \mathbf{0}\\
\mathbf{0} & \mathbf{0} & \mathbf{0}  & \ddots & & \\ 
\vdots \\
\mathbf{0} & \mathbf{0} & \mathbf{0} & \mathbf{0} & \cdots & \mathbf{A} & \mathbf{B}
\end{pmatrix}
$$
Right-multiplying $M$ by the elementary block matrix $E(i,N)$ shown below adds the $i$th column of $M$ multiplied by $N$ to the $(i-1)$th column of $M$.
$$
	E(i,N)=\bordermatrix{&c_1&\cdots & c_{i-1} & c_i & \cdots & c_{m+1}\cr
                r_1& \mathbf{I} &  \cdots & \mathbf{0}  & \mathbf{0} & \cdots & \mathbf{0} \cr
                \vdots& \vdots & \ddots & &  && \vdots \cr
                r_{i-1} & \bf{0} &  & \mathbf{I} & \bf{0} &  & \bf{0} \cr
                r_i& \bf{0} &  & \mathbf{N} & \mathbf{I} &  & \bf{0}\cr
                 \vdots & \vdots & & & & \ddots & \vdots \cr
                 r_{m+1} & \mathbf{0}  &  \cdots & \mathbf{0} & \mathbf{0} & \cdots & \mathbf{I}}
$$
In the case of $VT(1,\dots,1)$, this means that right multiplying $M$ by $E(i,-B^{-1}A)$ will cancel the $A$ block in the $i$th row and $(i-1)$th column (assuming $i \geq 2$). Therefore, right-multiplying $M$ by the product:
$$
\prod_{i=0}^{m-1}E(m+1-i,-B^{-1}A) = E(m+1,-B^{-1}A)\cdots E(2,-B^{-1}A) 
$$
will reduce $M$ into upper triangular form while preserving its determinant. The determinant is then the product of the determinants of the block matrices along the diagonal. Except for the entry in the first row and column, every entry along the diagonal is $B$, whose determinant is easily computed to be $u$. Because $B$ shows up $m$ times along the diagonal, we know that $u^{m}$ will be a factor of the determinant. The entry in first row and column, however, will take the form $C+(-1)^{m}D(B^{-1}A)^{m}$. For convenience, define the matrix $X$:
$$
	X=B^{-1}A=\begin{pmatrix}
		-\frac{1}{u}	&	\frac{1}{u}-v	\\
		0	&	-v
	\end{pmatrix}
$$
The entry in the first row and column of the matrix can now be rewritten as $C+(-1)^{m}DX^{m}$, and the problem of computing its determinant is reduced to finding a general form for $X^{m}$. To find such a form, notice that the following similarity relation holds:
$$
	X=PJP^{-1}=\begin{pmatrix}
		1	&	1	\\
		0	&	1
	\end{pmatrix}\begin{pmatrix}
		-\frac{1}{u}	&	0	\\
		0	&	-v
	\end{pmatrix}\begin{pmatrix}
		1	&	-1	\\
		0	&	1
	\end{pmatrix}
$$
Hence,
$$
	X^m=\begin{pmatrix}
		1	&	1	\\
		0	&	1
	\end{pmatrix}\begin{pmatrix}
		\left ( -\frac{1}{u} \right )^m	&	0	\\
		0	&	(-v)^m
	\end{pmatrix}\begin{pmatrix}
		1	&	-1	\\
		0	&	1
	\end{pmatrix}= (-1)^m\begin{pmatrix}
		\frac{1}{u^m} & v^m- \frac{1}{u^m} 	\\
		0	&	v^m
	\end{pmatrix}
$$
It is now straightforward to compute:
$$
\det(C+(-1)^mDX^{m})=1 - \frac{1}{u^m} - u v + \frac{v}{u^m} - v^{m+1} + u v^{m+1}
$$ 
Since the product of the determinants of the other entries on $M$'s diagonal is $u^m$, we see that:
$$
\det{M}= u^m\det(C+(-1)^mDX^{m}) =-1+u^m+v-u^{m+1}v-u^{m}v^{m+1}+u^{m+1}v^{m+1}
$$
Using the identity $x^n - 1 = (x-1)\sum_{i=0}^{n-1}{x^i}$, we can factor $\det{M}$ as:
$$\det{M} = \Delta_0(VT(1,\dots, 1)) =(u-1)(v-1)(uv-1)\sum_{i=0}^{m-1}\sum_{j=i}^{m-1}v^{i}u^{j}$$
A similar argument can be used to compute $\Delta_0$ for $VT(0,1,\dots,1)$, $VT(1,\dots,1,0)$, and $VT(0,1,\dots,1,0)$.
\end{proof}

\section{Alexander polynomial for Virtual Twist Knots: Recursion}\label{S:recursion}

We will use the skein relation of Lemma \ref{L:skein} to find a recursive formula for the Alexander polynomial of the virtual twist knots (more precisely, for the particular diagrams represented in Figure \ref{F:virtualtwist}). Our first step is to find a formula for the Alexander polynomial of the link that results when one of the classical crossings of a virtual twist knot is ``smoothed."  For this section, it is convenient to define the following functions:
\begin{align*}
	 &s(i)=\sum_{j=1}^{i}(a_j+1) \quad {\rm and\ }s(0) = 0\\
	 &p(i)= \left\{ \begin{matrix} 0 & {\rm if\ }i{\rm \ is\ even} \\ 1 & {\rm if\ }i{\rm \ is\ odd} \end{matrix} \right.\\
	 &\d = \sum_{j=1}^n{p(a_j)p(s(j))}\\
	 &\e(i) = (p(s(i-1))-1)+\sum_{j<i}p(a_j)p(s(j))+\sum_{j\geq i}p(a_j)p(1+s(j-1))
\end{align*}

\begin{figure}[htbp]
\begin{center}
\scalebox{.8}{\includegraphics{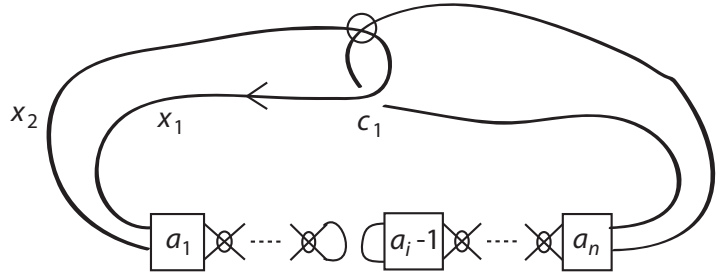}}
\end{center}
\caption{Virtual twist knot with a smoothed crossing.}
\label{F:smoothed}
\end{figure}

\begin{lem}\label{L:smoothed}
Consider the virtual link diagram $L_i$ (as shown in Figure \ref{F:smoothed}) generated by smoothing out the first crossing in the $i$th block in the diagram of the virtual twist knot $VT(a_1,\dots,a_n)$ shown in Figure \ref{F:virtualtwist}. Then 
$$\Delta_0(L_i)=(-uv)^{\sum_{j=1}^n{\left\lfloor \frac{\abs{a_j}}{2} \right\rfloor}}(-1)^{-p(s(i-1))+\d+s(n)}(uv)^{\e(i)}(u-1)(v-1).$$
\end{lem}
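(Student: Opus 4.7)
The plan is to simplify $L_i$ to a basic two-component virtual link diagram by iteratively applying Reidemeister I and II moves, tracking the multiplicative factors introduced at each step via Lemmas \ref{L:R1} and \ref{L:R2}.

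First, I would describe $L_i$ explicitly. The oriented smoothing of the first crossing in block $i$ breaks $VT(a_1,\ldots,a_n)$ into a two-component virtual link and joins adjacent arcs in a U-turn configuration at the smoothing site. This local structure produces Reidemeister II candidates within block $i$ and, via the surrounding virtual crossings (which essentially act as strand swaps), propagates similar simplifications to the remaining blocks.

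Second, within each block of $|a_j|$ consecutive same-sign classical crossings, I would verify that exactly $\lfloor |a_j|/2 \rfloor$ Reidemeister II moves can be applied, reducing the block to $0$ or $1$ leftover crossings according to the parity of $|a_j|$. By Lemma \ref{L:R2}, each R2 move contributes a factor of $-uv$, which sums across all $n$ blocks to give the $(-uv)^{\sum_j \lfloor |a_j|/2 \rfloor}$ factor in the formula.

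Third, each block with $|a_j|$ odd leaves behind a single crossing, which is then removed by a Reidemeister I move; by Lemma \ref{L:R1}, this contributes either $uv$ (for types Ia/Ib) or $-1$ (for types Ic/Id). The specific type is determined by the orientation at the loop and the sign of the crossing, which in turn are controlled by $p(a_j)$, $p(s(j))$, and $p(s(j-1))$. Summing the $uv$-contributions over all odd blocks yields $(uv)^{\e(i)}$, where the split of $\e(i)$ between $j<i$ and $j\geq i$ arises because blocks on opposite sides of the smoothed block $i$ experience a relative parity shift in their orientation data. The $-p(s(i-1))$ correction reflects that the smoothed crossing removes one contribution that would otherwise appear. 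The cumulative $-1$ factors from (Ic)/(Id) moves, together with sign contributions from the clasp and from the orientation of the final simplified diagram, produce the $(-1)^{-p(s(i-1))+\d+s(n)}$ factor. After all reductions, the diagram becomes a basic two-component virtual link whose Alexander polynomial can be computed directly as $(u-1)(v-1)$, consistent with Proposition \ref{P:factor}.

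The main obstacle is the combinatorial bookkeeping of R1 move types: each leftover crossing must be classified as (Ia/Ib) or (Ic/Id) based on several interacting parities, and consistent accounting across all $n$ blocks --- especially near the smoothed block $i$, where the correction $-p(s(i-1))$ enters --- requires a careful case analysis. Organizing this into cases (before, at, and after the smoothed block) and verifying that the contributions aggregate into the stated exponents is where the main technical work lies; the $(-uv)$ and $(uv)$ factors themselves are immediate once the moves are located.
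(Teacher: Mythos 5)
There is a genuine gap in the central mechanism of your argument. You attribute the factor $(-uv)^{\sum_j \lfloor |a_j|/2\rfloor}$ to Reidemeister (II) moves applied ``within each block of $|a_j|$ consecutive same-sign classical crossings.'' But no such moves exist: the two crossings removed by a Reidemeister (II) move always have opposite signs (the writhe is invariant under (II)), whereas consecutive crossings in a block of the twist all have the same sign and sit in a clasp configuration (each strand goes over at one crossing and under at the next), not the over--over ``poke'' configuration that (II) requires. The oriented smoothing does not change this, since it preserves all orientations and hence all crossing signs elsewhere in the diagram. What actually happens is that the smoothing turns each half of the twist into a hairpin, so that every remaining classical crossing in the twist becomes a kink removable by a Reidemeister (I) move; a full twist is undone by \emph{two} (I)-moves, one of type (Ia/Ib) contributing $uv$ and one of type (Ic/Id) contributing $-1$, whose product $-uv$ happens to coincide numerically with the (II)-move factor. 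So your formula is not obviously wrong, but the moves you invoke to justify its leading factor cannot be performed, and the entire count must be redone in terms of (I)-moves: all $|a_j|$ crossings of each block (not just a parity leftover) are removed by (I)-moves, and the exponents $\e(i)$, $\d$, and the correction $-p(s(i-1))$ arise from classifying each such move by type.

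A second, smaller omission: Lemma \ref{L:R1} is stated for a fixed labeling in which the three arcs at the kink are the first three arcs. In $L_i$ the arcs carry the labels inherited from Figure \ref{F:virtualtwist}, and reordering columns to match Lemma \ref{L:R1} can introduce an extra sign (e.g.\ a single transposition of the three relevant columns flips the sign, turning a $uv$ contribution into $-uv$). Since this lemma computes $\Delta_0$ of a specific labeled diagram rather than the normalized knot invariant, these transposition signs must be tracked move by move; they are part of how the $(-1)^{-p(s(i-1))+\d+s(n)}$ exponent is assembled, and your proposal does not account for them. Your identification of the terminal diagram as a two-component virtual link with $\Delta_0=\pm(u-1)(v-1)$ and of the clasp sign as the source of the $s(n)$ term is correct in spirit.
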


\begin{proof}
By using Reidemeister (I) moves to undo half-twists, the diagram for $L_i$ can be reduced to one of the virtual Hopf links, $VHL_+$ or $VHL_-$, shown in Figure \ref{F:hopf}. 

\begin{figure}[htbp]
\begin{center}
\scalebox{.8}{\includegraphics{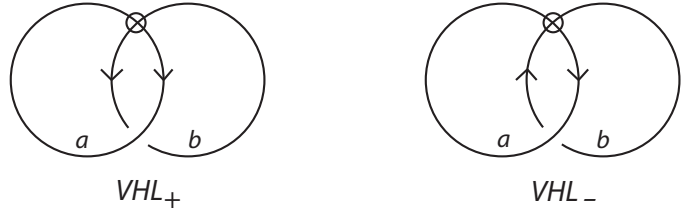}}
\end{center}
\caption{Hopf links $VHL_+$ and $VHL_-$.}
\label{F:hopf}
\end{figure}

Using the labels shown in Figure \ref{F:hopf}, we compute that $\Delta_0(VHL_+)=\left\vert\begin{matrix} u-1& 1-u \\ v-1 & 0 \end{matrix} \right\vert = (u-1)(v-1)$ and $\Delta_0(VHL_-)=\left\vert\begin{matrix} 1-u & u-1 \\ v-1 & 0 \end{matrix} \right\vert = -(u-1)(v-1)$, so the sign of $\Delta_0(L_i)$ (for the given labeling) depends on the sign of the crossing in the clasp of the original twist knot, $VT(a_1,\dots,a_n)$.  This crossing is positive exactly when $\sum{a_i} + (n-1) = s(n)-1$ is odd, or when $s(n)$ is even, so the final Hopf link has $\Delta_0 = (-1)^{s(n)}(u-1)(v-1)$.

\begin{figure}[htbp]
\begin{center}
\scalebox{1}{\includegraphics{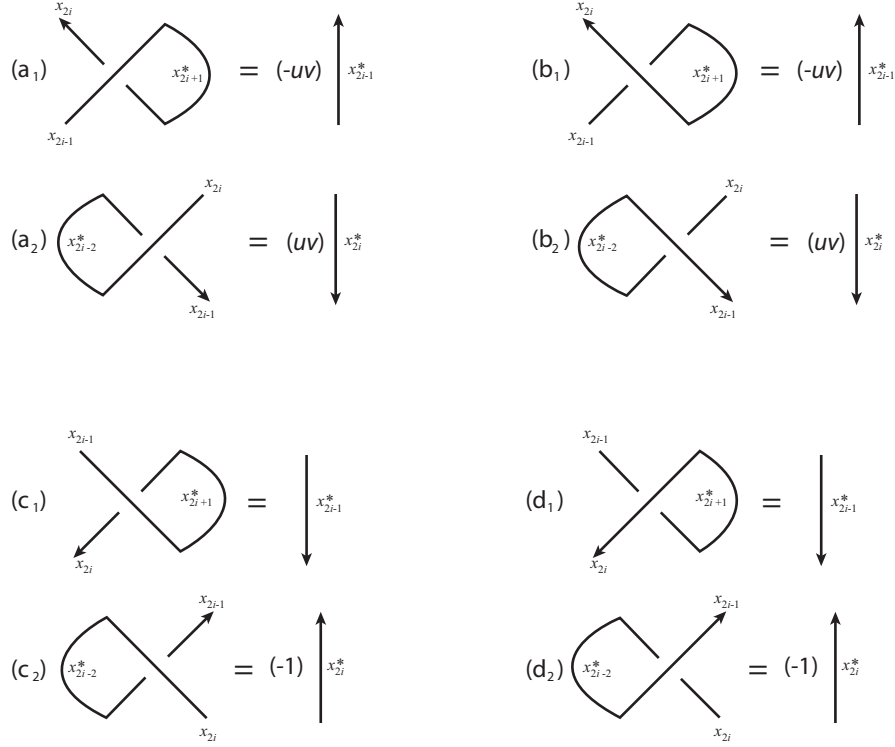}}
\end{center}
\caption{Effect of eight types of Reidemeister 1 moves on $\Delta_0$.}
\label{F:reidemeister1}
\end{figure}

By Lemma \ref{L:R1}, the Reidemeister (I) moves used to undo the half-twists will change $\Delta_0$ as shown in Figure \ref{F:reidemeister1} (the equalities in the figure are between the polynomials of the diagrams shown). To see this, we note that the labelings require us to distinguish two sub-types for each move. If we compare the labels on a move of type (Ia$_1$) to the label on a move of type (Ia) in Figure \ref{F:r1alexander}, there are two differences to account for.  First of all, the labels start at $2i-1$ rather than 1; moving the three columns corresponding to these arcs to the left side of the matrix requires moving each of them past some number $k$ of other columns.  Then the new arc with label $x_{2i-1}^*$ is moved back past the $k$ columns.  Altogether, this changes the determinant by a factor of $(-1)^{4k} = 1$. Secondly, the order of the three columns differs from the order in Figure \ref{F:r1alexander} by a transposition; this changes the determinant by a factor of $-1$.  Hence the effect of a move of type (Ia$_1$) is to multiply the polynomial by $-uv$, rather than by $uv$. Similar reasoning shows that the effects of the other moves are as shown in Figure \ref{F:reidemeister1}.

We first consider removing twists to the right of the smoothed crossing in $L_i$.  Each full twist involves two Reidemeister (I) moves, one of type (Ia$_2$ or Ib$_2$) and one of type (Id$_2$ or Ic$_2$) (depending on whether the crossings in the twist are positive or negative), and hence multiplies $\Delta_0$ by $(uv)(-1) = -uv$.  So in the $j$th block (where $j > i$), if $a_j$ is even then the effect of removing those twists is to multiply $\Delta_0$ by $(-uv)^{\frac{\abs{a_j}}{2}}$.  If $a_j$ is odd then the first half-twist is type (Ia$_2$ or Ib$_2$) if an even number of (real and virtual) crossings preceded the $j$th block, and type (Id$_2$ or Ic$_2$) otherwise.  So the first half-twist contributes $uv$ if $s(j-1)$ is even and $-1$ if $s(j-1)$ is odd. So the final change is to multiply $\Delta_0$ by:
$$
(-uv)^{\left\lfloor \frac{\abs{a_j}}{2} \right\rfloor}(-1)^{p(a_j)p(s(j-1))}(uv)^{p(a_j)p(1+s(j-1))}
$$
Notice that if $a_j$ is even, then $p(a_j)=0$, $\left\lfloor\frac{\abs{a_j}}{2}\right\rfloor=\frac{\abs{a_j}}{2}$, and we get $(-uv)^{\frac{\abs{a_j}}{2}}$. If $a_j$ is odd and $s(j-1)$ even, we get $(-uv)^{\left\lfloor\frac{\abs{a_j}}{2}\right\rfloor}(uv)$, and if $s(j-1)$ is odd, we get $(-uv)^{\left\lfloor\frac{\abs{a_j}}{2}\right\rfloor}(-1)$.

The $i$th block differs slightly in that its first crossing was smoothed out to produce $L$, so we are beginning with its second half-twist. Observe that $\left\lfloor\frac{\abs{a_i}-1}{2}\right\rfloor=\left\lfloor\frac{\abs{a_i}}{2}\right\rfloor-p(\abs{a_i}-1) = \left\lfloor\frac{\abs{a_i}}{2}\right\rfloor-p(a_i-1)$ (since $p(\abs{a_i}-1) = p(a_i-1)$), and that $p(x\pm 1)=1-p(x)$ for any $x$.  Hence, by similar reasoning as before, we see that the result of untwisting the $i$th block is to multiply $\Delta_0$ by:

\begin{align*}
(-uv)^{\left\lfloor \frac{\abs{a_i} -1}{2} \right\rfloor}(-1)^{p(a_i -1)p(s(i-1)+1)}&(uv)^{p(a_i -1)p(s(i-1))} \\
&= (-uv)^{\left\lfloor \frac{\abs{a_i}}{2} \right\rfloor-p(a_i-1)}(-1)^{p(a_i -1)p(s(i-1)+1)}(uv)^{p(a_i -1)p(s(i-1))}\\
&= (-uv)^{\left\lfloor \frac{\abs{a_i}}{2} \right\rfloor}(-1)^{p(a_i -1)p(s(i-1)+1)-p(a_i-1)}(uv)^{p(a_i -1)p(s(i-1)) - p(a_i-1)}\\
&=(-uv)^{\left\lfloor\frac{\abs{a_i}}{2}\right\rfloor}(-1)^{p(a_i-1)(p(s(i-1)+1)-1)}(uv)^{p(a_i-1)(p(s(i-1))-1)}\\
&=(-uv)^{\left\lfloor\frac{\abs{a_i}}{2}\right\rfloor}(-1)^{(1-p(a_i))(-p(s(i-1)))}(uv)^{(1-p(a_i))(-p(s(i-1)+1))}\\
&=(-uv)^{\left\lfloor \frac{\abs{a_i}}{2} \right\rfloor}(-1)^{(p(a_i)-1)p(s(i-1))}(uv)^{(p(a_i) -1)p(s(i-1)+1)}
\end{align*}

We must also remove any half-twists that remain to the left of the $i$th block, using moves of type (Ia$_1$ or Ib$_1$) and (Id$_1$ or ic$_1$). Once again, removing a full twist changes $\Delta_0$ by a factor of $-uv$.  In this case, only the type (Ia$_1$ or Ib$_1$) moves change $\Delta_0$, so when a block has an odd number of crossings we need to determine which type of move is needed to remove the extra crossing. We look specifically at the first crossing in the block.  A type (Ia$_1$ or Ib$_1$) move is used to remove a type 2 crossing, which occurs as the first crossing in the block when $s(j-1)$ is odd. Notice that if $s(j-1)$ is odd and $a_j$ is odd, then $p(s(j-1))=p(s(j)) = 1$. So the change to $\Delta_0$ from removing the twists in block $j$, where $j < i$, is to multiply by:
$$
(-uv)^{\left\lfloor \frac{\abs{a_j}}{2} \right\rfloor}(-uv)^{p(a_j)p(s(j-1))}=(-uv)^{\left\lfloor \frac{\abs{a_j}}{2} \right\rfloor}(-uv)^{p(a_j)p(s(j))}=(-uv)^{\left\lfloor \frac{\abs{a_j}}{2} \right\rfloor}(-1)^{p(a_j)s(j)}(uv)^{p(a_j)s(j)}
$$
Combining all of these results with the value of $\Delta_0$ for the virtual Hopf link gives:

$$
\begin{aligned}
\Delta_0(L)=&(-uv)^{\sum_{j = 1}^n{\left\lfloor\frac{\abs{a_j}}{2}\right\rfloor}}(-1)^{(p(a_i)-1)p(s(i-1))+\sum_{j<i}p(a_j)p(s(j))+\sum_{j>i}p(a_j)p(s(j-1))}\\
&\cdot(uv)^{(p(a_i)-1)p(s(i-1)+1)+\sum_{j<i}p(a_j)p(s(j))+\sum_{j>i}p(a_j)p(1+s(j-1))}\\
&\cdot(-1)^{s(n)}(u-1)(v-1)
\end{aligned}
$$
Observe that if $a_j$ is even, then $p(a_j)p(s(j-1))=p(a_j)p(s(j))=0$, and if $a_j$ is odd, then $p(s(j-1))=p(s(j))$, so $p(a_j)p(s(j-1))=p(a_j)p(s(j))$ for any $a_j$. Also, $p(s(j)+1) = 1-p(s(j))$. We can then reduce our expression for $\Delta_0(L)$ to:

$$
\begin{aligned}
\Delta_0(L)=&(-uv)^{\sum_{j = 1}^n{\left\lfloor\frac{\abs{a_j}}{2}\right\rfloor}}(-1)^{-p(s(i-1))+\sum_{j=1}^np(a_j)p(s(j))}\\
&\cdot(uv)^{(p(s(i-1))-1)+\sum_{j<i}p(a_j)p(s(j))+\sum_{j\geq i}p(a_j)p(1+s(j-1))}\\
&\cdot(-1)^{s(n)}(u-1)(v-1)\\
=&(-uv)^{\sum_{j = 1}^n{\left\lfloor\frac{\abs{a_j}}{2}\right\rfloor}}(-1)^{-p(s(i-1))+\d}(uv)^{\e(i)}(-1)^{s(n)}(u-1)(v-1)\\
=&(-uv)^{\sum_{j = 1}^n{\left\lfloor\frac{\abs{a_j}}{2}\right\rfloor}}(-1)^{-p(s(i-1))+s(n)+\d}(uv)^{\e(i)}(u-1)(v-1)
\end{aligned}
$$
\end{proof}

We use Lemma \ref{L:smoothed} to derive a recursive formula for the Alexander polynomial of a twist knot $VT(a_1, \dots, a_n)$.  We will derive the formula for the case when all $a_i$'s are nonnegative; we will discuss later how to modify the formula when some of the blocks of crossings are negative.

\begin{thm}\label{T:recursion}
Given the diagram of the virtual twist knot $VT(a_1,\dots,a_n)$ shown in Figure \ref{F:virtualtwist}, then:
$$ 
\overline{\Delta}_0(VT(a_1,\dots,a_n))(u,v)=(-uv)^{\sum_{i=1}^{n}\left\lfloor\frac{a_i}{2}\right\rfloor}\left(\overline{\Delta}_0(VT(p(a_1),\dots,p(a_n)))(u,v)+\sum_{i=1}^{n}\left\lfloor\frac{a_i}{2}\right\rfloor(-1)^{\d+s(n)}(uv)^{\e(i)}\right)
$$
where for a given knot $K$, $\Delta_0(K)(u,v)=(u-1)(v-1)(uv-1)\overline{\Delta}_0(K)(u,v)$.
\end{thm}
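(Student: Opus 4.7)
The plan is to apply the skein relation from Lemma \ref{L:skein} iteratively, block by block, to reduce each $a_i$ down to $p(a_i)\in\{0,1\}$, evaluating the smoothed diagrams via Lemma \ref{L:smoothed} and cleaning up the sign-switched diagrams via Lemma \ref{L:R2}. Fix a block index $i$ with $a_i\geq 2$, and apply the skein relation at the first crossing of block $i$. Then $L_+$ is $VT(a_1,\ldots,a_n)$ itself, $L_0$ is the smoothed link $L_i$ of Lemma \ref{L:smoothed}, and $L_-$ is $VT(a_1,\ldots,a_n)$ with that one crossing switched to negative. In $L_-$ the first two crossings of block $i$ now form a cancelling $(-,+)$ pair removable by a Reidemeister (II) move, which by Lemma \ref{L:R2} multiplies $\Delta_0$ by $-uv$ and produces $VT(a_1,\ldots,a_i-2,\ldots,a_n)$. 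Thus one step of the recursion reads
$$\Delta_0(VT(a_1,\ldots,a_i,\ldots,a_n))=(-uv)\,\Delta_0(VT(a_1,\ldots,a_i-2,\ldots,a_n))+(uv-1)\,\Delta_0(L_i).$$

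Next I would iterate this single-step reduction $\lfloor a_i/2\rfloor$ times to take block $i$ all the way down to $p(a_i)$, and then iterate over $i=1,\ldots,n$. The key observation that makes the iteration collapse cleanly is that in the formula of Lemma \ref{L:smoothed}, only the prefactor $(-uv)^{\sum_j\lfloor|a_j|/2\rfloor}$ depends on the magnitudes of the $a_j$'s; all other pieces ($p(s(\cdot))$, $\d$, $\e(i)$, $s(n)\bmod 2$) depend only on parities and are therefore invariant under the operation $a_j\mapsto a_j-2$. Consequently the smoothed-link contributions at successive steps differ from one another only by factors of $(-uv)$, which telescope against the $(-uv)$ being accumulated from the Reidemeister (II) moves. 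Within a single block this yields
$$\Delta_0(VT(a_1,\ldots,a_i,\ldots,a_n))=(-uv)^{\lfloor a_i/2\rfloor}\Delta_0(VT(a_1,\ldots,p(a_i),\ldots,a_n))+\lfloor a_i/2\rfloor (uv-1)\Delta_0(L_i),$$
where $L_i$ is still the smoothed link of the \emph{original} diagram. Iterating across $i=1,\ldots,n$ combines the prefactors into a uniform $(-uv)^{\sum_i\lfloor a_i/2\rfloor}$ out front. Finally, dividing by $(u-1)(v-1)(uv-1)$ via Proposition \ref{P:factor} converts $\Delta_0$ into $\overline{\Delta}_0$ and substitutes the closed form of $\Delta_0(L_i)/[(u-1)(v-1)]$ from Lemma \ref{L:smoothed} into the sum, giving the stated formula.

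The main obstacle is the bookkeeping of $(-uv)$-powers across the nested iteration. When block $j$ has already been reduced and we then compute the smoothed link $L_i$ for a later block $i>j$, the relevant smoothed link comes from a partially reduced knot whose $\Delta_0(L_i)$ differs from that of the original by $(-uv)^{-\lfloor a_j/2\rfloor}$. One has to verify that these inverse powers exactly cancel the $(-uv)^{\lfloor a_j/2\rfloor}$ that was pulled outside by the earlier block's reduction, so that every smoothed-link term in the final sum acquires the same prefactor $(-uv)^{\sum_i\lfloor a_i/2\rfloor}$ regardless of the order in which the blocks are reduced. The parity-invariance of the exponents in Lemma \ref{L:smoothed} is precisely what makes this cancellation work, and once that is checked the remaining normalization via Proposition \ref{P:factor} is routine.
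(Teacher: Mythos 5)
Your overall strategy is the same as the paper's: apply the skein relation at the first crossing of block $i$, use a Reidemeister (II) move on $L_-$ to drop $a_i$ by $2$ at the cost of $-uv$, evaluate $L_0$ via Lemma \ref{L:smoothed}, telescope within each block, and then across blocks. The telescoping and the parity-invariance observation are correct and match the paper. But there is one concrete gap that produces a wrong sign in roughly half the terms: you write the single-step skein relation uniformly as $\Delta_0(L_+)=(-uv)\Delta_0(L_-')+(uv-1)\Delta_0(L_i)$. Lemma \ref{L:skein} is stated for a \emph{fixed labeling} of the four arcs at the crossing, and whether the first crossing of block $i$ matches that labeling depends on whether it is type 1 or type 2 (equivalently, on $p(s(i-1))$). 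For a type 2 crossing one must permute the labels ($a_1\leftrightarrow a_4$, $a_2\leftrightarrow a_3$, $a_1^*\leftrightarrow a_4^*$) before applying the lemma, and these three transpositions flip the sign of $\Delta_0(L_0)$, so the correct relation is $\Delta_0(L_+)=\Delta_0(L_-)+(-1)^{p(s(i-1))}(uv-1)\Delta_0(L_i)$. This sign is exactly what cancels the factor $(-1)^{-p(s(i-1))}$ appearing in the formula of Lemma \ref{L:smoothed}; with your uniform $+$ sign that factor survives, and each term of the final sum comes out as $(-1)^{-p(s(i-1))+\d+s(n)}(uv)^{\e(i)}$ rather than $(-1)^{\d+s(n)}(uv)^{\e(i)}$, which is wrong whenever $s(i-1)$ is odd.

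A related but smaller omission: asserting that the Reidemeister (II) cancellation in $L_-$ multiplies $\Delta_0$ by $-uv$ also requires a labeling check, since Lemma \ref{L:R2} is likewise stated for a fixed arc labeling and the pair of crossings being cancelled can again be of either type. Here the check does come out to $-uv$ in both cases (moving the six relevant columns to the front is an even permutation, and the odd/even label swap needed in the type 2 case contributes an even number of transpositions both before and after the move), so no error results, but the verification is part of the content of the proof rather than an immediate citation of the lemma. Once the type-dependent sign $(-1)^{p(s(i-1))}$ is inserted into your skein step, the rest of your argument goes through and recovers the stated formula.
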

\begin{proof}
We are going to use the skein relation of Lemma \ref{L:skein}.  Let $L_+=VT(p(a_1),\dots,p(a_{i-1}),a_i,\dots,a_n)$, and suppose $a_i \geq 2$. All the crossings in the twists are positive, so choose the first crossing in the $i$th block to be the crossing we change to construct $L_-$ and $L_0$. In $L_-$, changing the sign of the crossing gives us two crossings that we can remove by a Reidemeister (II) move, leaving us with a diagram for $VT(p(a_1),\dots,p(a_{i-1}),a_i-2,a_{i+1},\dots,a_n)$. Figure \ref{F:reidemeister2} shows the two possible diagrams for $L_-$, depending on whether the first crossing in the $i$th block was type 1 or type 2.  In either case, using Lemma \ref{L:R2}, removing the two crossings changes $\Delta_0$ by a factor of $-uv$.  To see this, we first move the columns corresponding to arcs $x_{2r-1}$ through $x_{2r+4}$ to the left of the Alexander matrix; since there are 6 columns, this requires an even number of transpositions, so the determinant remains unchanged.  If the crossing changed was type 1 (as on the left of Figure \ref{F:reidemeister2}), then we apply a move of type (IIa) (see Figure \ref{F:r2alexander}), and $\Delta_0$ changes by a factor of $-uv$.  If the crossing was type 2, then we apply a move of type (IIb), only with the odd and even labels reversed.  Since there are three such pairs of labels before the move, and one pair after the move, reversing these labels does not change the sign of the determinant of the Alexander matrix either before or after the move, so again the effect is to multiply $\Delta_0$ by $-uv$.  So $\Delta_0(L_-) = (-uv)\Delta_0(VT(p(a_1),\dots,p(a_{i-1}),a_i-2,a_{i+1},\dots,a_n))$.

\begin{figure}[htbp]
\begin{center}
\scalebox{.6}{\includegraphics{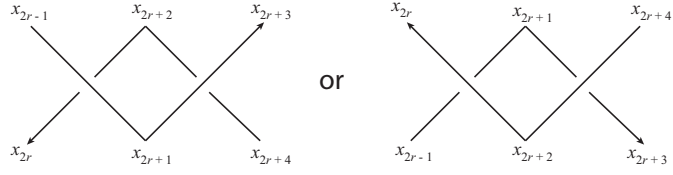}}
\end{center}
\caption{Possible diagrams for $L_-$.}
\label{F:reidemeister2}
\end{figure}

Now we turn to $L_0$.  If the crossing we smooth is type 1, then we are in the situation of Lemma \ref{L:skein}, and $\Delta_0(L_+) = \Delta_0(L_-) + (uv-1)\Delta_0(L_0)$.  However, if the crossing is type 2, we need to permute the labels to apply Lemma \ref{L:skein}.  In particular, in Figure \ref{F:skein}, we would need to interchange $a_1$ and $a_4$, $a_2$ and $a_3$, and $a_1^*$ and $a_4^*$.  These effect of these three transpositions is to change the sign of $\Delta_0(L_0)$, so we have $\Delta_0(L_+) = \Delta_0(L_-) - (uv-1)\Delta_0(L_0)$.  Notice that the first crossing of the $i$th block is type 1 exactly when $p(s(i-1)) = 0$.

So:
\begin{align*}
\Delta_0(VT(p(a_1),\dots,p(a_{i-1}),a_i,\dots,a_n)) = &(-uv)\Delta_0(VT(p(a_1),\dots,p(a_{i-1}),a_i-2,a_{i+1},\dots,a_n))\\
	&+ (-1)^{p(s(i-1))}(uv-1)\Delta_0(L_0)
\end{align*}

Now we can substitute the formula we found for $\Delta_0(L_0)$ in Lemma \ref{L:smoothed}.  Notice that the factor $(-1)^{-p(s(i-1))}$ in the expression for $\Delta_0(L_0)$ exactly cancels the factor $(-1)^{p(s(i-1))}$ above.  Also, note that $\left\lfloor \frac{p(a_j)}{2} \right\rfloor = 0$ for $j \leq i-1$. Hence:
\begin{align*}
\begin{split}
\Delta_0(VT(p(a_1),\dots,p(a_{i-1}),a_i,\dots,a_n)) = &(-uv)\Delta_0(VT(p(a_1),\dots,p(a_{i-1}),a_i-2,a_{i+1},\dots,a_n))\\&+(-uv)^{\sum_{j\geq i}{\left\lfloor \frac{a_j}{2} \right\rfloor}}(-1)^{\d+s(n)}(uv)^{\e(i)}(uv-1)(u-1)(v-1)
\end{split}
\end{align*}
We can then apply the skein relation to $VT(p(a_1),\dots,p(a_{i-1}),a_i-2,a_{i+1},\dots,a_n)$ in the same way. Again, changing the first crossing in the $i$th block to a negative crossing allows us to reduce $L_-$ to $VT(p(a_1),\dots,p(a_{i-1}),a_i-4,a_{i+1},\dots,a_n)$ by a Reidemeister (II) move which gives us the relation:
$$
	\Delta_0(L_-)=(-uv)\Delta_0(VT(p(a_1),\dots,p(a_{i-1}),a_i-4,a_{i+1},\dots,a_n))
$$
Because the parity of each block is the same as it was in $VT(p(a_1),\dots,p(a_{i-1}),a_i,\dots,a_n)$, we know that the number of Reidemeister (I) moves of each type used to reduce $L_0$ to a virtual Hopf link will remain unchanged except for the fact that the $i$th block now has size $a_i-2$, so you have one fewer factor of $-uv$. Thus, we see that:
\begin{align*}
		\Delta_0(VT(p(a_1),\dots,p(a_{i-1}),a_i-2,a_{i+1},\dots,a_n))=&(-uv)\Delta_0(VT(p(a_1),\dots,p(a_{i-1}),a_i-4,a_{i+1},\dots,a_n))\\&+(-uv)^{-1+\sum_{j\geq i}{\left\lfloor \frac{a_j}{2} \right\rfloor}}(-1)^{\d+s(n)}(uv)^{\e(i)}(uv-1)(u-1)(v-1)
\end{align*}
and so
\begin{align*}
	\Delta_0(VT(p(a_1),\dots,p(a_{i-1}),a_i,\dots,a_n)) =&(-uv)\Delta_0(VT(p(a_1),\dots,p(a_{i-1}),a_i-2,a_{i+1},\dots,a_n))\\
	&+(-uv)^{\sum_{j\geq i}{\left\lfloor \frac{a_j}{2} \right\rfloor}}(-1)^{\d+s(n)}(uv)^{\e(i)}(uv-1)(u-1)(v-1)	\\
	 =&(-uv)^{2}\Delta_0(VT(p(a_1),\dots,p(a_{i-1}),a_i-4,a_{i+1},\dots,a_n))\\
	 &+2(-uv)^{\sum_{j\geq i}{\left\lfloor \frac{a_j}{2} \right\rfloor}}(-1)^{\d+s(n)}(uv)^{\e(i)}(uv-1)(u-1)(v-1)
\end{align*}
If we continue to repeat the skein relation in this manner, inductively we find:
\begin{align*}
	\Delta_0(VT(p(a_1),\dots,p(a_{i-1}),a_i,\dots,a_n))=&(-uv)^{\left \lfloor \frac{a_i}{2} \right \rfloor}\Delta_0(VT(p(a_1),\dots,p(a_i),a_{i+1},\dots,a_n))+\\&\left \lfloor \frac{a_i}{2} \right \rfloor (-uv)^{\sum_{j\geq i}{\left\lfloor \frac{a_j}{2} \right\rfloor}}(-1)^{\d+s(n)}(uv)^{\e(i)}(uv-1)(u-1)(v-1)
\end{align*}
Continuing inductively, we find:
\begin{align*}
\Delta_0(VT(a_1,\dots,a_n))(u,v)=&(-uv)^{\sum_{i=1}^{n}\left\lfloor\frac{a_i}{2}\right\rfloor}\Delta_0(VT(p(a_1),\dots,p(a_n))(u,v)+\\&(-uv)^{\sum_{i=1}^n{\left\lfloor \frac{a_i}{2} \right\rfloor}}(uv-1)(u-1)(v-1)(-1)^{\d+s(n)}\left ( \sum_{i=1}^{n}\left\lfloor\frac{a_i}{2}\right\rfloor(uv)^{\e(i)} \right )
\end{align*}

Then, after dividing through by $(u-1)(v-1)(uv-1)$, our formula becomes:

$$\overline{\Delta}_0(VT(a_1,\dots,a_n))(u,v)=(-uv)^{\sum_{i=1}^{n}\left\lfloor\frac{a_i}{2}\right\rfloor}\left(\overline{\Delta}_0(VT(p(a_1),\dots,p(a_n))(u,v)+(-1)^{\d+s(n)}\sum_{i=1}^{n}\left\lfloor\frac{a_i}{2}\right\rfloor(uv)^{\e(i)}\right)$$
as desired.
\end{proof}
\medskip

We now have an algorithm to compute $\Delta_0(VT(a_1,\dots,a_n))(u,v)$: \begin{enumerate}
	\item First, use Theorem \ref{T:recursion} to reduce the computation to finding $\overline{\Delta}_0(VT(p(a_1),\dots,p(a_n))$.
	\item If $p(a_i)=0$ for any $i$ with $1<i<n$, then use a Reidemeister (II*) move to remove the two consecutive virtual crossings (we will refer to this in the future as a contraction). Do this for all such $i$.  This may result in longer blocks of classical crossings.
	\item Check if we've reduced $VT(p(a_1),\dots,p(a_n))$ to one of the following: $VT(1,\dots,1)$, $VT(0,1,\dots,1)$, $VT(1,\dots,1,0)$, or $VT(0,1,\dots,1,0)$. If we haven't, we return to the first step. If we have, then we apply Theorem \ref{T:base}.
	\item Finally, normalize the result to find the Alexander polynomial of the virtual knot, rather than simply the diagram.
\end{enumerate}

This process will terminate because every every iteration reduces the number of classical and/or virtual crossings in the twist, unless we are at one of our base cases.  To show how to use the process, we will compute a few examples.

\begin{example}\label{E:1block}
The knot $VT(k)$ (with $k \geq 0$) is the twist knot with $k$ (positive) classical crossings in the twist, and no virtual crossings (except in the clasp).  Abusing notation, let $VT(k)$ also denote the diagram for this knot following the pattern of Figure \ref{F:virtualtwist}.
$$\overline{\Delta}_0(VT(k))(u,v) = (-uv)^{\left\lfloor\frac{k}{2}\right\rfloor}\left(\overline{\Delta}_0(VT(p(k)))(u,v) + \left\lfloor\frac{k}{2}\right\rfloor(-1)^{\d+s(1)}(uv)^{\e(1)}\right)$$
In this case, $s(1) = k+1$, $\d = p(k)p(k+1)=0$ and $\e(1) = -1+p(k)p(1+0)=p(k)-1$.  Also, from Theorem \ref{T:base}, $\overline{\Delta}_0(VT(0))(u,v) = 0$ and $\overline{\Delta}_0(VT(1))(u,v) = 1$, which means $\overline{\Delta}_0(VT(p(k)))(u,v) = p(k)$. So:
\begin{align*}
\overline{\Delta}_0(VT(k))(u,v) &= (-uv)^{\left\lfloor\frac{k}{2}\right\rfloor}\left(p(k) + \left\lfloor\frac{k}{2}\right\rfloor(-1)^{k+1}(uv)^{p(k)-1}\right) \\
&= \left\{\begin{matrix} \frac{k}{2}(-uv)^{\frac{k}{2} - 1} & k\ {\rm even}\\ (-uv)^{\left\lfloor\frac{k}{2}\right\rfloor}\left(\left\lfloor\frac{k}{2}\right\rfloor + 1\right) & k\ {\rm odd} \end{matrix} \right.
\end{align*}
If we normalize this to find the Alexander polynomial for the virtual knot $VT(k)$ (rather than just the diagram), we get
$$\overline{\Delta}_0(VT(k))(u,v) = \left\{\begin{matrix} \frac{k}{2} & k\ {\rm even}\\ \left\lfloor\frac{k}{2}\right\rfloor + 1 & k\ {\rm odd} \end{matrix} \right.$$
\end{example} \medskip

\begin{example}\label{E:2blocks}
The knot $VT(a,b)$ (with $a, b \geq 0$) has two blocks of (positive) crossings in the twist, separated by a virtual crossing.  As in the previous example, we will also let $VT(a,b)$ denote the diagram for this knot shown in Figure \ref{F:virtualtwist}.  From Theorem \ref{T:recursion} we have
$$\overline{\Delta}_0(VT(a,b))(u,v) = (-uv)^{\left\lfloor\frac{a}{2}\right\rfloor + \left\lfloor\frac{b}{2}\right\rfloor}\left(\overline{\Delta}_0(VT(p(a), p(b)))(u,v) + (-1)^{\d+s(2)}\left(\left\lfloor\frac{a}{2}\right\rfloor(uv)^{\e(1)}+\left\lfloor\frac{b}{2}\right\rfloor(uv)^{\e(2)}\right)\right)$$
Observe that
\begin{align*}
	\d &= p(a)p(a+1) + p(b)p(a+b+2) = p(b)p(a+b) \\
	s(2) &= a+b+2\\
	\e(1) &= -1 + p(a)p(1) + p(b)p(a+2) = p(a)(p(b)+1) - 1 \\
	\e(2) &= (p(a+1)-1) + p(a)p(a+1) + p(b)p(a+2) = -p(a) + p(b)p(a) = p(a)(p(b) - 1) 
\end{align*}
Also, from Theorem \ref{T:base} we have $\overline{\Delta}_0(VT(1,1))(u,v) = 1+u+uv$ and $\overline{\Delta}_0(VT(0,1))(u,v) = \overline{\Delta}_0(VT(1,0))(u,v) = \overline{\Delta}_0(VT(0,0))(u,v) = 0$.  Combining these, we find:
$$\overline{\Delta}_0(VT(a,b))(u,v) = (-uv)^{\left\lfloor\frac{a}{2}\right\rfloor + \left\lfloor\frac{b}{2}\right\rfloor}\cdot \left\{ 
\begin{matrix} 
(1 + u + uv) + \left(\left\lfloor\frac{a}{2}\right\rfloor(uv)^{1}+\left\lfloor\frac{b}{2}\right\rfloor(uv)^{0}\right) & {\rm \ if\ }a, b{\rm \ odd} \\
\left\lfloor\frac{a}{2}\right\rfloor(uv)^{-1}+\left\lfloor\frac{b}{2}\right\rfloor(uv)^{0} & {\rm \ if\ }a{\rm \ even}, b{\rm \ odd}\rule{0pt}{12pt} \\
(-1)\left(\left\lfloor\frac{a}{2}\right\rfloor(uv)^{0}+\left\lfloor\frac{b}{2}\right\rfloor(uv)^{-1}\right) & {\rm \ if\ }a{\rm \ odd}, b{\rm \ even}\rule{0pt}{12pt} \\
\left\lfloor\frac{a}{2}\right\rfloor(uv)^{-1}+\left\lfloor\frac{b}{2}\right\rfloor(uv)^{0} & {\rm \ if\ }a, b{\rm \ even}\rule{0pt}{12pt}
\end{matrix} \right.$$
Once we normalize the polynomial, we get:
$$\overline{\Delta}_0(VT(a,b))(u,v) = \left\{ \begin{matrix} 
\left(\left\lfloor\frac{b}{2}\right\rfloor + 1\right) +u + \left(\left\lfloor\frac{a}{2}\right\rfloor + 1\right)uv & {\rm \ if\ }a, b{\rm \ odd} \\
\left\lfloor\frac{a}{2}\right\rfloor+\left\lfloor\frac{b}{2}\right\rfloor uv & {\rm \ if\ }a{\rm \ even}, b{\rm \ odd}\rule{0pt}{12pt} \\
\left\lfloor\frac{b}{2}\right\rfloor+\left\lfloor\frac{a}{2}\right\rfloor uv & {\rm \ if\ }a{\rm \ odd}, b{\rm \ even}\rule{0pt}{12pt} \\
\left\lfloor\frac{a}{2}\right\rfloor+\left\lfloor\frac{b}{2}\right\rfloor uv & {\rm \ if\ }a, b{\rm \ even}\rule{0pt}{12pt}
\end{matrix} \right.$$
\end{example} \medskip

\begin{example}\label{E:VT(7,4,3,5,9)}
Consider the knot $VT(7,4,3,5,9)$, with the diagram from Figure \ref{F:virtualtwist}.  First observe that $\d = 3$, $s(5) = 33$, $\e(1) = 0$, $\e(2) = -1$, $\e(3) = 0$, $\e(4) = 1$ and $\e(5) = 2$.  So by Theorem \ref{T:recursion}, we have:
\begin{align*}
\overline{\Delta}_0(VT(7,4,3,5,9))(u,v) &= (-uv)^{12}\left(\overline{\Delta}_0(VT(1,0,1,1,1))(u,v) + 4 + 2(uv)^{-1} + 2(uv) + 4(uv)^2 \right)\\
&= (-uv)^{12}\left(\overline{\Delta}_0(VT(2,1,1))(u,v) + 4 + 2(uv)^{-1} + 2(uv) + 4(uv)^2 \right)\\
&= (-uv)^{12}\left((-uv)\left(\overline{\Delta}_0(VT(0,1,1))(u,v) - (uv)^{-1} \right) + 4 + 2(uv)^{-1} + 2(uv) + 4(uv)^2 \right) \\
&= (-uv)^{12}\left((-uv)\left(v - (uv)^{-1} \right) + 4 + 2(uv)^{-1} + 2(uv) + 4(uv)^2 \right) \\
&= (-uv)^{12}\left(-uv^2 + 5 + 2(uv)^{-1} + 2(uv) + 4(uv)^2 \right)
\end{align*}
If we normalize this to get the invariant for the virtual knot, rather than the diagram, we get:
$$\overline{\Delta}_0(VT(7,4,3,5,9))(u,v) = 2 + 5uv - u^2v^3 + 2u^2v^2 + 4u^3v^3$$
\end{example}

\section{Negative crossings} \label{S:negative}

If some of the blocks of crossings are negative, we need to modify the algorithm outlined in the last section.  Theorem \ref{T:recursion} still holds, with the caveats that each appearance of $a_i$ is replaced by $\abs{a_i}$, the term $\left\lfloor\frac{\abs{a_i}}{2}\right\rfloor(uv)^{\e(i)}$ is {\em subtracted} whenever $a_i < 0$, and the twist knot is reduced to $VT(\e_1p(a_1), \dots, \e_np(a_n))$, where $\e_i$ is the sign of the crossings in the $i$th block.  In step (2) of the algorithm, if we combine a block of positive and negative crossings, we can cancel some of the crossings using Reidemeister (II) moves, at the expense of multiplying the polynomial by a factor of $-uv$ for each Reidemeister (II) move.  The end result of the algorithm will be one of $VT(\pm 1,\dots,\pm 1)$, $VT(0,\pm 1,\dots,\pm1)$, $VT(\pm1,\dots,\pm1,0)$, or $VT(0,\pm1,\dots,\pm1,0)$.

We can now use our skein relation and Lemma \ref{L:smoothed} to change any negative crossings to positive crossings, resulting in one of the base cases in Section \ref{S:base}.  The proof of the following result is straightforward, and is left to the reader.  (In the cases where the first block is $0$, rather than $\pm 1$, the crossing being changed is type 2, and the sign in the skein relation changes.)

\begin{cor} \label{C:negative}
In each case below, the twist knot has $n$ blocks, each with a single crossing (except possibly the first and/or last), and the crossing in the $i$th block is negative.
\begin{align*}
\overline{\Delta}_0(VT(\pm 1,\dots, -1, \dots,\pm 1)) &= \overline{\Delta}_0(VT(\pm 1,\dots, 1, \dots,\pm 1)) - (uv)^{n-i} \\
\overline{\Delta}_0(VT(0, \pm 1,\dots, -1, \dots,\pm 1)) &= \overline{\Delta}_0(VT(0, \pm 1,\dots,1, \dots,\pm 1)) - (-1)^n(uv)^{i-2} \\
\overline{\Delta}_0(VT(\pm 1,\dots, -1, \dots,\pm 1, 0)) &= \overline{\Delta}_0(VT(\pm 1,\dots, 1, \dots,\pm 1, 0)) + (uv)^{n-i-1} \\
\overline{\Delta}_0(VT(0, \pm 1,\dots, -1,\dots,\pm 1, 0)) &= \overline{\Delta}_0(VT(0, \pm 1,\dots, 1, \dots,\pm 1, 0)) - (-1)^n(uv)^{i-2}
\end{align*}
\end{cor}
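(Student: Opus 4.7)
The plan is to invoke the skein relation of Lemma \ref{L:skein} at the negative crossing in the $i$th block. Let $L_-$ denote the given knot (so $a_i=-1$), let $L_+$ denote the same diagram with that crossing flipped to $+1$, and let $L_0$ be the corresponding smoothing. As observed in the proof of Theorem \ref{T:recursion}, the sign in front of $\Delta_0(L_0)$ in the skein relation is $+1$ when the crossing is type 1 and $-1$ when it is type 2, with the first crossing of the $i$th block being type 1 precisely when $p(s(i-1))=0$. Solving for $\Delta_0(L_-)$ gives
\[
\Delta_0(L_-)=\Delta_0(L_+)-(-1)^{p(s(i-1))}(uv-1)\Delta_0(L_0).
\]

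In every base case the blocks satisfy $\abs{a_j}\le 1$, so $\sum_j\lfloor\abs{a_j}/2\rfloor=0$ and Lemma \ref{L:smoothed} collapses to
\[
\Delta_0(L_0)=(-1)^{-p(s(i-1))+\d+s(n)}(uv)^{\e(i)}(u-1)(v-1).
\]
Substituting this into the previous equation, the two powers of $(-1)^{p(s(i-1))}$ cancel, and after dividing through by $(u-1)(v-1)(uv-1)$ we obtain the clean identity
\[
\overline{\Delta}_0(L_-)=\overline{\Delta}_0(L_+)-(-1)^{\d+s(n)}(uv)^{\e(i)}.
\]

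What remains is to evaluate the exponents in each of the four families, reading $a_j$ as $\abs{a_j}$ throughout as in the preceding discussion. When there are no zero-blocks, every $\abs{a_j}+1=2$, so $s(j)$ is always even, $\d=0$, $s(n)=2n$, and a direct count gives $\e(i)=n-i$, producing the correction $-(uv)^{n-i}$. A leading $0$-block flips the parity of every $s(j)$ from $j=1$ onward, which forces $\e(i)=i-2$ and yields $(-1)^{\d+s(n)}=(-1)^n$, covering both cases with $a_1=0$. A trailing $0$-block alone leaves $s(j)$ even for $j<n$ but kills the $j=n$ contribution to the second sum defining $\e(i)$, giving $\e(i)=n-i-1$ and $(-1)^{\d+s(n)}=-1$. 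The main obstacle is purely bookkeeping, since $\d$ and $\e(i)$ are delicate functions of where the $0$-blocks sit relative to $i$; once these parities are tabulated, the four identities follow at once from the uniform formula above.
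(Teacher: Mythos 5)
Your proposal is correct and follows exactly the route the paper intends: apply the skein relation of Lemma \ref{L:skein} at the negative crossing, substitute the value of $\Delta_0(L_0)$ from Lemma \ref{L:smoothed} (where the $(-1)^{\pm p(s(i-1))}$ factors cancel, which is precisely how the paper's parenthetical remark about the type-2 sign change is absorbed), and then evaluate $\d$, $s(n)$, and $\e(i)$ in each of the four base families; the paper leaves these routine computations to the reader, and your tabulated values ($\e(i)=n-i$, $i-2$, $n-i-1$, $i-2$ with signs $1$, $(-1)^n$, $-1$, $(-1)^n$) all check out against the stated identities.
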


\begin{example}
We will compute $\overline{\Delta}_0$ for $VT(-7, 3, -5, -2, 3)$.  Observe that $\d = 1$, $s(5) = -3$, $\e(1) = 2$, $\e(2) = 1$, $\e(3) = 0$, $\e(4) = -1$ and $\e(5) = 0$. So by Theorem \ref{T:recursion}, we have:
\begin{align*}
\overline{\Delta}_0(VT(-7, 3, -5, -2, 3))(u,v) &= (-uv)^{8}\left(\overline{\Delta}_0(VT(-1,1,-1,0,1))(u,v) - 3(uv)^2 + uv - 2 - (uv)^{-1} + 1 \right)\\
&= (-uv)^{8}\left((-uv)\overline{\Delta}_0(VT(-1,1,0))(u,v) - 3(uv)^2 + uv - (uv)^{-1} - 1 \right)\\
&= (-uv)^{8}\left((-uv)\left(\overline{\Delta}_0(VT(1,1,0))(u,v)+uv\right) - 3(uv)^2 + uv - (uv)^{-1} - 1 \right)\\
&= (-uv)^{8}\left((-uv)(u+uv) - 3(uv)^2 + uv - (uv)^{-1} - 1 \right)\\
&= (-uv)^{8}\left(-u^2v-(uv)^2 - 3(uv)^2 + uv - (uv)^{-1} - 1 \right)\\
&= (-uv)^{8}\left(-u^2v - 4(uv)^2 + uv - (uv)^{-1} - 1 \right)
\end{align*}
Normalizing to get the invariant for the virtual knot, we get:
$$\overline{\Delta}_0(VT(-7, 3, -5, -2, 3))(u,v) = 1 + uv - u^2v^2 + u^3v^2 + 4u^3v^3$$
\end{example}

\section{Other virtual twist knots} \label{S:clasp}

\begin{figure}[htbp]
\begin{center}
\scalebox{.8}{\includegraphics{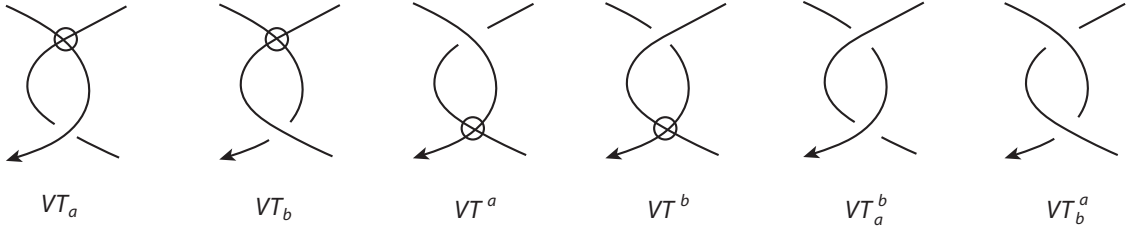}}
\end{center}
\caption{Possible clasps for virtual twist knots.}
\label{F:clasps}
\end{figure}

The choice of the clasp in the virtual twist knot $VT(a_1, \dots, a_n)$ shown in Figure \ref{F:virtualtwist} is somewhat arbitrary.  We could well have picked any of the other clasps shown in Figure \ref{F:clasps} (note that the clasp denoted $VT_a$ is the one we have been using).  In this section we will briefly discuss these other options.

\begin{figure}[htbp]
\begin{center}
\scalebox{.8}{\includegraphics{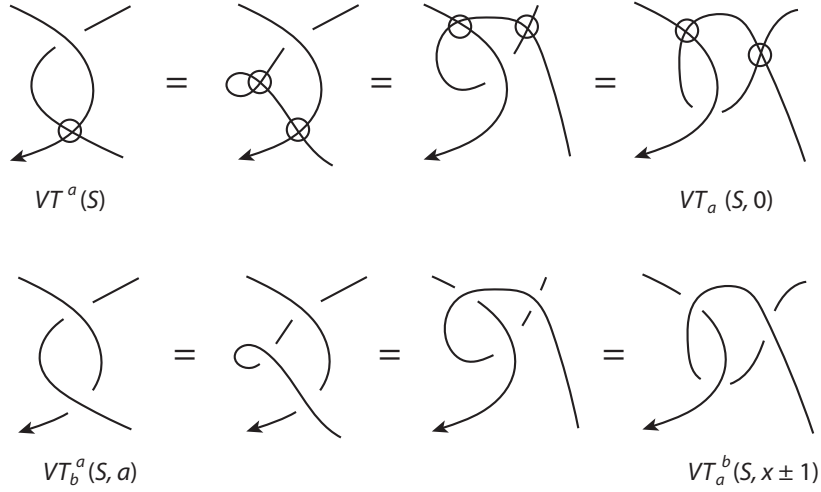}}
\end{center}
\caption{Equivalences among clasps.}
\label{F:VTa}
\end{figure}

We first observe that some of these clasps are really equivalent.  If $S = (a_1, \dots, a_n)$ is a string of twists, then we can use Reidemeister moves to show that $VT^a(S) = VT_a(S, 0)$ and $VT_b^a(S, x) = VT_a^b(S, x \pm 1)$ (see Figure \ref{F:VTa}).  Also, $VT_b(S) = VT_a(-S)^\#$ (where $-S$ indicates that we have reversed the sign of each block of crossings in the twist) and $VT^b(S) = VT^a(-S)^\#$.  Then by Proposition \ref{P:reverse} we have:
\begin{align*}
	\Delta_0(VT^a(S))(u,v) &= \Delta_0(VT_a(S, 0))(u,v) \\
	\Delta_0(VT_b(S))(u,v) &= -\Delta_0(VT_a(-S))(v,u) \\
	\Delta_0(VT^b(S))(u,v) &= -\Delta_0(VT_a(-S, 0))(v,u) \\
	\Delta_0(VT_b^a(S,x))(u,v) &= \Delta_0(VT_a^b(S, x\pm 1))(u,v) 
\end{align*}

So the only really new class of virtual knots (as far as $\Delta_0$ is concerned) is $VT_a^b(S)$.  We can compute the base cases and recursion formula much as we did for $VT(S)$; in fact, since classical Hopf links have $\Delta_0 = 0$, the recursion is even simpler in this case.  The next theorem gives the results of these computations; the details are left to the reader.

\begin{thm}\label{T:VTab}
Given a sequence of $m$ ones, then:
\begin{align*}
		\Delta_0(VT_a^b(1,\dots,1)) &= uv(u-1)(v-1)(uv-1)\sum_{i=0}^{m-2}\sum_{j=0}^{m-2}{u^iv^j} {\rm \ (or\ 0\ if\ }m\leq 1) \\
		\Delta_0(VT_a^b(0,1,\dots,1)) &= (-1)^{m}uv(u-1)(v-1)(uv-1)\sum_{i=0}^{m-1}\sum_{j=0}^{m-1}{u^{i}v^{j}} {\rm \ (or\ 0\ if\ }m = 0) \\
		\Delta_0(VT_a^b(1,\dots,1,0)) &= uv(u-1)(v-1)(uv-1)\sum_{i=0}^{m-1}\sum_{j=0}^{m-1}{u^{i}v^{j}} {\rm \ (or\ 0\ if\ }m = 0) \\
		\Delta_0(VT_a^b(0,1,\dots,1,0)) &= (-1)^{m}(u-1)(v-1)(uv-1)\sum_{i=0}^{m}\sum_{j=0}^{m}{u^{i}v^{j}} 
\end{align*}
Moreover, if all crossings in the twist are positive (negative crossings can be dealt with as in Section \ref{S:negative}), then
$$\overline{\Delta}_0(VT_a^b(a_1, \dots, a_n))(u,v) = (-uv)^{\sum_{i=1}^{n}\left\lfloor\frac{a_i}{2}\right\rfloor}\overline{\Delta}_0(VT_a^b(p(a_1),\dots,p(a_n)))(u,v)$$
After normalizing, this implies
$$\Delta_0(VT_a^b(a_1, \dots, a_n))(u,v) = \Delta_0(VT_a^b(p(a_1),\dots,p(a_n)))(u,v)$$
\end{thm}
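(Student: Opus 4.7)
The plan is to mirror, in simplified form, the arguments used for the $VT$ clasp in Theorems \ref{T:base} and \ref{T:recursion}. For the four base cases, I would set up the Alexander matrix of $VT_a^b(1,\dots,1)$ (and the three variants with zeros at the ends) as a block-bordered matrix. The twist itself contributes the same $2\times 2$ blocks $A$ and $B$ as in the proof of Theorem \ref{T:base}, since neither the crossing types along the twist nor their alternation pattern depends on the clasp; only the clasp row is altered, giving new blocks $C'$ and $D'$ reflecting the $VT_a^b$ geometry. Right-multiplying by the same product of elementary matrices $E(i,-B^{-1}A)$ reduces everything to upper-triangular form, so that
\[
\det M = u^m \det\bigl(C' + (-1)^m D'(B^{-1}A)^m\bigr).
\]
The diagonalization $B^{-1}A = PJP^{-1}$ from Theorem \ref{T:base} still applies, and substituting the resulting closed form for $(B^{-1}A)^m$ reduces each base case to a single $2 \times 2$ determinant computation. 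Applying $x^n-1=(x-1)\sum_{i=0}^{n-1}x^i$ to the result yields the four displayed formulas; the ``or $0$'' cases correspond to degenerations in small $m$ and are handled by direct inspection.

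For the recursion, the argument of Theorem \ref{T:recursion} applies with one decisive simplification. At the first crossing of the $i$-th block, invoke Lemma \ref{L:skein}: the $L_-$ term, after a Reidemeister (II) move, contributes $(-uv)\,\Delta_0(VT_a^b(p(a_1),\dots,p(a_{i-1}),a_i-2,a_{i+1},\dots,a_n))$ exactly as before. The $L_0$ term, however, reduces (via the same Reidemeister (I) moves used in Lemma \ref{L:smoothed}) to a \emph{classical} Hopf link rather than to $VHL_\pm$, because the $VT_a^b$ clasp consists of two classical crossings. Since the classical Hopf link satisfies $\Delta_0=0$, the entire sum of Hopf-link contributions that appeared in Theorem \ref{T:recursion} collapses, and iterating the skein relation $\lfloor a_i/2\rfloor$ times on each block gives
\[
\overline{\Delta}_0(VT_a^b(a_1,\dots,a_n))(u,v) = (-uv)^{\sum_i \lfloor a_i/2\rfloor}\,\overline{\Delta}_0(VT_a^b(p(a_1),\dots,p(a_n)))(u,v).
\]
After the normalization that passes from the diagram invariant to the knot invariant, the monomial $(-uv)^{\sum_i \lfloor a_i/2\rfloor}$ is a unit factor and is absorbed, yielding the final equality for $\Delta_0$.

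I expect the main obstacle to be bookkeeping in the four base cases: the blocks $C'$ and $D'$ change with which end(s) of the twist carry a $0$, and in the endpoint variants the overall matrix size differs from the generic $(1,\dots,1)$ case, so each base case requires its own determinant computation and factorization. A secondary but routine step is verifying that the classical Hopf link indeed has $\Delta_0=0$, which amounts to a single $4 \times 4$ determinant checked in both orientations of the smoothing. The remaining ingredients---the effect of Reidemeister (II) moves in Lemma \ref{L:R2}, the sign bookkeeping from comparing type 1 and type 2 crossings, and the induction on $\lfloor a_i/2\rfloor$---transcribe from the $VT$ proofs essentially without change.
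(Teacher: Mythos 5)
Your proposal matches the paper's intended argument exactly: the paper leaves the details of Theorem \ref{T:VTab} to the reader, indicating only that the base cases are computed by the same block-matrix method as Theorem \ref{T:base} and that the recursion simplifies because the smoothed link is now a \emph{classical} Hopf link with $\Delta_0 = 0$ --- which is precisely the observation your argument turns on. The one caveat is that with two classical crossings in the clasp the border blocks $C'$ and $D'$ are larger than $2\times 2$, so the displayed reduction $\det M = u^m\det\bigl(C'+(-1)^mD'(B^{-1}A)^m\bigr)$ needs minor adjustment, but this is exactly the bookkeeping you already flagged.
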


\begin{example} \label{E:VTab}
As an example, we will compute $\Delta_0(VT_a^b(x,y))$, where $x, y \geq 0$.  By the recursion, after normalizing, we have $\Delta_0(VT_a^b(x,y)) = \Delta_0(VT_a^b(p(x), p(y)))$.  If we normalize by powers of $uv$ and the sign, we find that $\Delta_0(VT_a^b(x,y)) = 1$ for all choices of $x$ and $y$.
\end{example}

\section{Alexander polynomial and the odd writhe} \label{S:oddwrithe}

The {\em odd writhe} was introduced by Kauffman \cite{ka2}, who proved it is an invariant of virtual knots. A classical crossing of a virtual knot is called {\em odd} if the path from the crossing back to itself goes through an odd number of classical crossings. The {\em odd writhe} is the sum of the signs of the odd crossings.  In classical knots the odd writhe is always zero, since all crossings of classical knots are even.  In this section, we will prove that the odd writhe of a virtual twist knot $K$ is related to its Alexander polynomial; we conjecture that this relationship holds in general.  We first verify the relationship for our ``base cases," and then use the recursion of Theorem \ref{T:recursion} to extend it to other virtual twist knots.

\begin{lem}\label{L:owtwist}
The odd writhe of the virtual twist knot $VT(a_1, \dots, a_n)$ is $\sum{a_i} + p(\sum{a_i})(-1)^{s(n)}$.
\end{lem}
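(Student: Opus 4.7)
The plan is to compute $OW$ directly from its definition by classifying each classical crossing of $VT(a_1,\dots,a_n)$ according to its sign and its parity. For the signs, the $|a_i|$ crossings inside block $i$ all have sign $\mathrm{sgn}(a_i)$, contributing $a_i$ to the total signed count for that block. The clasp crossing $c_1$ has sign $(-1)^{s(n)}$, as already observed in the proof of Lemma \ref{L:smoothed} (it is positive precisely when $s(n)$ is even).

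The heart of the argument is the parity claim: every twist crossing is odd, while $c_1$ is odd if and only if $\sum_i a_i$ is odd. To establish this I would write down the Gauss code of $VT(a_1,\dots,a_n)$ by tracing the knot. Starting just past the clasp on the top strand, one moves rightward through the twist (registering the first visit to each twist crossing in order), passes around the turn-around on the right, comes back leftward through the twist (registering each second visit), and finally re-enters the clasp to pick up the second visit to $c_1$. Virtual crossings contribute no chords to the Gauss code, but each virtual crossing in the twist and in the clasp swaps which planar strand the traveler occupies, so the precise order in which the second visits are registered on the return trip depends on the parities of the partial sums $s(i)$ and of the block sizes $a_i$. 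Once the Gauss code is in hand, computing the interleaving count at each chord is a finite combinatorial check.

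Combining signs and parities gives
\[
OW(VT(a_1,\dots,a_n))\;=\;\sum_{i=1}^n a_i \;+\; \chi\cdot(-1)^{s(n)},
\]
where $\chi\in\{0,1\}$ records whether $c_1$ is odd. By the parity claim, $\chi = p\bigl(\sum a_i\bigr)$, so the right-hand side equals $\sum a_i + p\bigl(\sum a_i\bigr)(-1)^{s(n)}$, as claimed.

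The main obstacle is the combinatorial bookkeeping in the Gauss-code step: the interleaving pattern depends delicately on the parities of the $s(i)$'s and $a_i$'s, in much the same flavor as the exponents $\varepsilon(i)$ and $\delta$ that appeared in Lemma \ref{L:smoothed}. A careful case analysis, broken up by the parities of the $a_i$'s, should reduce the proof to verifying that each twist chord interleaves with an odd number of other chords, and that the number of twist chords interleaving the clasp chord has parity equal to $p\bigl(\sum a_i\bigr)$.
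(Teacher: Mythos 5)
Your proposal is correct and follows the same route as the paper, whose proof simply asserts the two facts you isolate (every twist crossing is odd; the clasp crossing $c_1$ is odd exactly when the number of classical twist crossings is odd) together with the sign $(-1)^{s(n)}$ of the clasp crossing. The delicate case analysis you anticipate is not actually needed: virtual crossings contribute no chords and do not change the order in which the classical crossings are encountered, so the cyclic Gauss word is always $c_2\cdots c_{m+1}\,c_1\,c_{m+1}\cdots c_2\,c_1$ independently of the parities of the $a_i$ and $s(i)$, and from this one reads off directly that each twist chord interleaves only with the clasp chord (hence is odd) while the clasp chord interleaves with all $\sum\abs{a_i}$ twist chords.
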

\begin{proof}
All the crossings in the twist are odd, and the crossing in the clasp is odd exactly when the total number of crossings in the twist is odd.  The sign of the crossing in the clasp is given by $(-1)^{s(n)}$.
\end{proof}

\begin{lem}\label{L:owbase}
For the virtual twist knot diagrams $VT(1,\dots,1)$, $VT(0,1,\dots,1)$, $VT(1,\dots,1,0)$, and $VT(0,1,\dots,1,0)$ (following the pattern of Figure \ref{F:virtualtwist}), we have:
$$2\overline{\Delta}_0(VT(a_1,\dots,a_n))(-1,-1) = (-1)^{\delta+s(n)+\sum_{i=1}^{n}\left\lfloor\frac{a_i}{2}\right\rfloor}OW(VT(a_1,\dots,a_n))$$
where $OW$ denotes the odd writhe.
\end{lem}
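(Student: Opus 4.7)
My plan is a case-by-case verification using the explicit polynomial formulas in Theorem \ref{T:base} and the odd-writhe formula in Lemma \ref{L:owtwist}. All four base families have $\overline{\Delta}_0$ of the shape $\pm u^{a}v^{b}\cdot S(u,v)$, where $S(u,v)$ is one of the double sums $\sum_{0\le i\le j\le k} u^{i}v^{j}$ or $\sum_{0\le i\le j\le k} v^{i}u^{j}$; at $u=v=-1$ the two become the same signed count, so everything reduces to a single combinatorial evaluation.

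The key computational step is the identity
\begin{equation*}
S_k \;:=\; \sum_{i=0}^{k}\sum_{j=i}^{k}(-1)^{i+j} \;=\; \sum_{d=0}^{k}(k+1-d)(-1)^{d} \;=\; \left\lceil \tfrac{k+1}{2}\right\rceil,
\end{equation*}
obtained by reindexing $d=j-i$ (so $(-1)^{i+j}=(-1)^{d}$ and $i$ ranges over $k+1-d$ values). Equivalently $2S_{k}=(k+1)+p(k+1)$. This is precisely the shape of the odd writhe that will appear on the right-hand side, which is what makes the identity balance.

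For each of the four families I would then carry out three short computations. First, collapse $\overline{\Delta}_0(-1,-1)$ to $\pm S_{m-1}$ or $\pm S_{m-2}$ using the explicit $u^{a}v^{b}$ prefactor and the $(-1)^{m}$ sign out front where present. Second, read off $OW=\sum a_{i}+p(\sum a_{i})(-1)^{s(n)}$ from Lemma \ref{L:owtwist}; this is immediate because $\sum a_{i}$ is just the number of $1$'s and $s(n)\in\{2m,\,2m+1,\,2m+1,\,2m+2\}$ across the four families. Third, compute the sign $(-1)^{\delta+s(n)+\sum\lfloor a_{i}/2\rfloor}$, noting $\sum\lfloor a_{i}/2\rfloor=0$ in every case and that $\delta$ reduces to a count of indices $j$ with $a_{j}=1$ and $s(j)$ odd, which is straightforward since the nonzero $a_{j}=1$ produce consecutive odd values of $s(j)$. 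Matching the three pieces against $2S_{k}=(k+1)+p(k+1)$ verifies the lemma term by term.

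The main obstacle is not conceptual but purely bookkeeping: correctly aligning the leading $(-1)^{m}$ in the formulas for $VT(0,1,\dots,1)$ and $VT(0,1,\dots,1,0)$, the flip $(-1)^{s(n)}$ inside the odd writhe, and the overall sign $(-1)^{\delta+s(n)+\sum\lfloor a_{i}/2\rfloor}$ on the right. Organizing the parity data into a single table with one row per family and one column per sign contribution makes the identity collapse to the evaluation of $S_{k}$ above, so I would present the proof in that format.
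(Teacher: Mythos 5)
Your proposal is correct and follows essentially the same route as the paper: a four-case verification combining the explicit formulas of Theorem \ref{T:base} with Lemma \ref{L:owtwist}, hinging on the evaluation $\sum_{i=0}^{k}\sum_{j=i}^{k}(-1)^{i+j}=\lceil\frac{k+1}{2}\rceil$ (the paper states this as $\lceil m/2\rceil$ with $k=m-1$) and a per-case parity computation of $\delta$, $s(n)$, and $\sum\lfloor a_i/2\rfloor$. The remaining work is exactly the bookkeeping you describe, and it goes through as you outline.
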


\begin{proof}

{\sc Case 1: } Given $VT(1,\dots,1)$ with $m$ classical crossings in the twist, $OW(VT(1,\dots,1))=m+p(m)$ by Lemma \ref{L:owtwist} (since $s(m) = 2m$).

By Theorem \ref{T:base},
$$\overline{\Delta}_0(VT(1,\dots,1))(-1,-1)=\sum_{i=0}^{m-1}(-1)^i\sum_{j=i}^{m-1}(-1)^{j}.$$
Notice that:
	$$
		\sum_{j=i}^{m-1}(-1)^{j}= \begin{cases}
0, & m-1\text{ and }i\text{ have opposite parity}\\ 
1, & m-1\text{ and }i\text{ are both even}\\ 
-1, & m-1\text{ and }i\text{ are both odd} 
\end{cases}
	$$
	Therefore, we see that:
	$$
		\sum_{i=0}^{m-1}(-1)^{i}\sum_{j=i}^{m-1}(-1)^{j}=\begin{cases}
		\frac{m}{2}, & m-1\text{ is odd}\\ 
\frac{m+1}{2}, & m-1\text{ is even}
\end{cases}=\left\lceil\frac{m}{2}\right\rceil
	$$
So in this case:
$$2\overline{\Delta}_0(VT(1,\dots,1))(-1,-1) = 2\left\lceil\frac{m}{2}\right\rceil = m + p(m) = OW(VT(1,\dots,1))$$
Now, notice that:
	$$
		p(s(i))=p\left ( \sum_{j=1}^{i}(1+1) \right )=p(2i)=0	
	$$
	for any $1\leq i\leq m$. Therefore:
	$$
		\delta=\sum_{i=1}^{m}p(a_i)p(s(i))=0	
	$$
	We also have that $s(m) = 2m$ and $\sum_{i=1}^{m}\left\lfloor\frac{a_i}{2}\right\rfloor = 0$, so we see that $(-1)^{\delta+s(m)+\sum_{i=1}^{m}\left\lfloor\frac{a_i}{2}\right\rfloor}=(-1)^{2m}=1$, and we get the desired result. \medskip

\noindent{\sc Case 2: } Given $VT(0,1,\dots,1)$ with $m$ classical crossings in the twist, $OW(VT(1,\dots,1))=m-p(m)$ by Lemma \ref{L:owtwist} (since $s(m+1) = 2m+1$).

By Theorem \ref{T:base}, 
$$\overline{\Delta}_0(VT(0, 1,\dots,1))(-1,-1)=(-1)^{m+1}\sum_{i=0}^{m-2}(-1)^i\sum_{j=i}^{m-2}(-1)^{j}.$$
As in the previous case,
	$$
		\sum_{i=0}^{m-2}(-1)^{i}\sum_{i=j}^{m-2}(-1)^{j}=\begin{cases}
		\frac{m-1}{2}, & m\text{ is odd}\\ 
\frac{m}{2}, & m\text{ is even}
\end{cases}=\left\lfloor\frac{m}{2}\right\rfloor
	$$
So in this case:
$$2\overline{\Delta}_0(VT(0, 1,\dots,1))(-1,-1) = 2(-1)^{m+1}\left\lfloor\frac{m}{2}\right\rfloor = (-1)^{m+1}(m - p(m)) = (-1)^{m+1}OW(VT(0, 1,\dots,1))$$
Now, notice that:
	$$
		p(s(i))=p(2(i-1)+1)=1
	$$
	for any $1\leq i\leq m+1$. Therefore:
	$$
		\delta=\sum_{i=1}^{m+1}p(a_i)p(s(i))=\sum_{i=2}^{m+1}1=m
	$$
So $(-1)^{\delta+s(m+1)+\sum_{i=1}^{m+1}\left\lfloor\frac{a_i}{2}\right\rfloor}=(-1)^{m+(2m+1)+0}=(-1)^{3m+1}=(-1)^{m+1}$. That is exactly the sign difference between $2\overline{\Delta}_0(VT(0,1,\dots,1))(-1,-1)$ and $OW(VT(0,1,\dots,1))$. \medskip
	
\noindent{\sc Case 3: } Given $VT(1,\dots,1,0)$ with $m$ classical crossings in the twist, $OW(VT(1,\dots,1))=m-p(m)$ by Lemma \ref{L:owtwist} (since $s(m+1) = 2m+1$).

By Theorem \ref{T:base}, 
$$\overline{\Delta}_0(VT(1,\dots,1,0))(-1,-1)=(-1)\sum_{i=0}^{m-2}(-1)^i\sum_{j=i}^{m-2}(-1)^{j} = (-1)\left\lfloor\frac{m}{2}\right\rfloor$$
So in this case:
$$2\overline{\Delta}_0(VT(1,\dots,1,0))(-1,-1) = 2(-1)\left\lfloor\frac{m}{2}\right\rfloor = (-1)(m - p(m)) = (-1)OW(VT(1,\dots,1,0))$$
Now, notice that:
	$$
		p(s(i))=p\left ( \sum_{j=1}^{i}(1+1) \right )=p(2i)=0	
	$$
	for any $1\leq i\leq m$. Also, $p(s(m+1))=1$. Therefore:
	$$
		\delta=\sum_{i=1}^{m+1}p(a_i)p(s(i))=p(a_{m+1})p(s(m+1))=0
	$$
So $(-1)^{\delta+s(m+1)+\sum_{i=1}^{m+1}\left\lfloor\frac{a_i}{2}\right\rfloor}=(-1)^{2m+1}=-1$. That is exactly the sign difference between\\ $2\overline{\Delta}_0(VT(1,\dots,1,0))(-1,-1)$ and $OW(VT(1,\dots,1,0))$.\medskip

\noindent{\sc Case 4: } Given $VT(0,1,\dots,1,0)$ with $m$ classical crossings in the twist, $OW(VT(1,\dots,1))=m+p(m)$ by Lemma \ref{L:owtwist} (since $s(m+2) = 2m+2$).

By Theorem \ref{T:base}, 
$$\overline{\Delta}_0(VT(0,1,\dots,1,0))(-1,-1)=(-1)^{m}\sum_{i=0}^{m-1}(-1)^i\sum_{j=i}^{m-1}(-1)^{j}= (-1)^m\left\lceil\frac{m}{2}\right\rceil$$
So in this case:
$$2\overline{\Delta}_0(VT(0,1,\dots,1,0))(-1,-1) = 2(-1)^m\left\lceil\frac{m}{2}\right\rceil = (-1)^m(m + p(m)) = (-1)^mOW(VT(0,1,\dots,1,0))$$
Now, notice that:
	$$
		p(s(i))=p\left ( (0+1)+\sum_{j=2}^{i}(1+1) \right )=p(2(i-1)+1)=1	
	$$
	for any $1\leq i\leq m+1$. Also, $p(s(m+2))=0$. Therefore:
	$$
		\delta=\sum_{i=1}^{m+2}p(a_i)p(s(i))=m
	$$
So $(-1)^{\delta+s(m+2)+\sum_{i=1}^{m+2}\left\lfloor\frac{a_i}{2}\right\rfloor}=(-1)^{3m+2} = (-1)^m$. That is exactly the sign difference between\\ $2\overline{\Delta}_0(VT(0,1,\dots,1,0))(-1,-1)$ and $OW(VT(0,1,\dots,1,0))$.
\end{proof}

Now we will use the recursion formula of Theorem \ref{T:recursion} to generalize this relationship to all virtual twist knots.

\begin{thm}\label{T:oddwrithe}
For any virtual twist knot diagram $VT(a_1, \dots, a_n)$ (following the pattern of Figure \ref{F:virtualtwist}), we have:
$$2\overline{\Delta}_0(VT(a_1,\dots,a_n))(-1,-1) = (-1)^{\delta+s(n)+\sum_{i=1}^{n}\left\lfloor\frac{a_i}{2}\right\rfloor}OW(VT(a_1,\dots,a_n))$$
\end{thm}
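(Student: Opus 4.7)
The plan is to proceed by induction on $\sum_{i=1}^n |a_i|$, using Theorem \ref{T:recursion} to drive the inductive step and Lemma \ref{L:owbase} to anchor the base. When $|a_i|\leq 1$ for every $i$, the diagram either matches one of the four templates handled by Lemma \ref{L:owbase}, or else contains an internal zero block; in the latter subcase I would contract the two flanking virtual crossings via a Reidemeister (II*) move to produce a diagram with one fewer block, and run an inner induction on $n$ to reduce to a template. When some $|a_i|\geq 2$, I would apply Theorem \ref{T:recursion} to relate $\overline{\Delta}_0(VT(a_1,\dots,a_n))$ to $\overline{\Delta}_0(VT(p(a_1),\dots,p(a_n)))$, whose total crossing count is strictly smaller and on which the induction hypothesis applies.

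The key computation is that substituting $u=v=-1$ in Theorem \ref{T:recursion} collapses $(uv)^{\e(i)}$ to $1$ and $(-uv)^k$ to $(-1)^k$, giving
\begin{equation*}
2\overline{\Delta}_0(VT(a_1,\dots,a_n))(-1,-1) = (-1)^{\sum \lfloor a_i/2\rfloor}\!\left(2\overline{\Delta}_0(VT(p(a_1),\dots,p(a_n)))(-1,-1) + 2(-1)^{\d+s(n)}\sum_{i=1}^n \left\lfloor\tfrac{a_i}{2}\right\rfloor\right).
\end{equation*}
Since $a_i\equiv p(a_i)\pmod 2$, the parities of $s(n)$ and $\d$ are preserved when each $a_i$ is replaced by $p(a_i)$, and $\sum\lfloor p(a_i)/2\rfloor=0$; the induction hypothesis therefore gives $2\overline{\Delta}_0(VT(p(a_1),\dots,p(a_n)))(-1,-1) = (-1)^{\d+s(n)}OW(VT(p(a_1),\dots,p(a_n)))$. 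Moreover, Lemma \ref{L:owtwist} implies $OW(VT(a_1,\dots,a_n)) = OW(VT(p(a_1),\dots,p(a_n))) + 2\sum\lfloor a_i/2\rfloor$, because replacing each $a_i$ by $p(a_i)$ removes $2\lfloor a_i/2\rfloor$ odd crossings all of the same sign. Plugging both facts into the display and factoring out $(-1)^{\d+s(n)}$ yields exactly $(-1)^{\d+s(n)+\sum\lfloor a_i/2\rfloor}\,OW(VT(a_1,\dots,a_n))$, as claimed.

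The hardest part will be the sign bookkeeping. I expect two delicate points: verifying that $\d+s(n)+\sum\lfloor a_i/2\rfloor$ has the correct parity after contracting an internal zero (this reduces to small parity identities for $s$ and $\d$), and handling the negative-crossing variant from Section \ref{S:negative}, where Corollary \ref{C:negative} and the sign-modified recursion replace Lemma \ref{L:owbase} and Theorem \ref{T:recursion}. Structurally the calculation is identical at $u=v=-1$, but one must confirm that every sign change in the correction term of the recursion is matched by a corresponding sign change in the odd writhe contribution $a_i$ when $a_i<0$.
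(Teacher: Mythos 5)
Your proposal follows essentially the same route as the paper: reduce to the four templates of Lemma \ref{L:owbase} via the recursion of Theorem \ref{T:recursion} and contractions of internal zero blocks, and verify that each step preserves the identity by substituting $u=v=-1$ into the recursion and matching the correction term $2\sum\lfloor a_i/2\rfloor$ against the change in odd writhe from Lemma \ref{L:owtwist} --- which is exactly the central computation in the paper. The one piece you flag but do not execute, the parity of $\delta+s(n)+\sum\lfloor a_i/2\rfloor$ under contraction, is handled in the paper by a four-case check (on the parities of $a_i$ and $b$) showing the net change is zero, confirming your expectation that it reduces to small parity identities.
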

\begin{proof}
We have shown that we can reduce any virtual twist knot $VT(a_1, \dots, a_n)$ to one of $VT(1,\dots,1)$, $VT(0,1\dots,1)$, $VT(1,\dots,1,0)$, or $VT(0,1,\dots,1,0)$ by successive applications of (i) the recursion formula in Theorem \ref{T:recursion} and (ii) contractions (removing blocks with 0 crossings).  By Lemma \ref{L:owbase}, we know that our relation holds for the four base cases. It remains to show that it is preserved by both the recursive operation and by contraction.

First we will show it is preserved by the recursion.  Inductively, we assume that 
$$2\overline{\Delta}_0(VT(p(a_1),\dots,p(a_n)))(-1,-1)=(-1)^{\delta+s(n)+\sum_{i=1}^{n}\left\lfloor\frac{p(a_i)}{2}\right\rfloor}OW(VT(p(a_1),\dots,p(a_n))).$$
Notice that $\d$ is the same for both $VT(a_1, \dots, a_n)$ and $VT(p(a_1), \dots, p(a_n))$, and that $s(n)$ has the same sign for both, so these two terms in the power of $-1$ are the same for both knots.  Now, using Theorem \ref{T:recursion}, we have:
\begin{align*}
2\overline{\Delta}_0(VT(a_1,&\dots,a_n))(-1,-1)=2(-1)^{\sum_{i=1}^{n}\left\lfloor\frac{a_i}{2}\right\rfloor}\left ( \overline{\Delta}_0(VT(p(a_1),\dots,p(a_n)))(-1,-1)+\sum_{i=1}^{n}\left\lfloor\frac{a_i}{2}\right\rfloor(-1)^{\delta+s(n)} \right )\\
&=(-1)^{\delta+s(n)+\sum_{i=1}^{n}\left\lfloor\frac{a_i}{2}\right\rfloor+\sum_{i=1}^{n}\left\lfloor\frac{p(a_i)}{2}\right\rfloor}OW(VT(p(a_1),\dots,p(a_n)))+2\sum_{i=1}^{n}\left\lfloor\frac{a_i}{2}\right\rfloor(-1)^{\delta+s(n)+\sum_{i=1}^{n}\left\lfloor\frac{a_i}{2}\right\rfloor}\\
&=(-1)^{\delta+s(n)+\sum_{i=1}^{n}\left\lfloor\frac{a_i}{2}\right\rfloor}\left ( (-1)^{\sum_{i=1}^{n}\left\lfloor\frac{p(a_i)}{2}\right\rfloor}\left ( \sum_{i=1}^{n}p(a_i)+(-1)^{s(n)}p\left ( \sum_{i=1}^{n}p(a_i) \right ) \right )+\sum_{i=1}^{n}2\left\lfloor\frac{a_i}{2}\right\rfloor \right )\\
&=(-1)^{\delta+s(n)+\sum_{i=1}^{n}\left\lfloor\frac{a_i}{2}\right\rfloor}\left ( \sum_{i=1}^{n}\left ( p(a_i)+(a_i - p(a_i)) \right )+(-1)^{s(n)}p\left ( \sum_{i=1}^{n}a_i \right ) \right)\\
&=(-1)^{\delta+s(n)+\sum_{i=1}^{n}\left\lfloor\frac{a_i}{2}\right\rfloor}\left ( \sum_{i=1}^{n}a_i+(-1)^{s(n)}p\left ( \sum_{i=1}^{n}a_i \right ) \right )\\
&=(-1)^{\delta+s(n)+\sum_{i=1}^{n}\left\lfloor\frac{a_i}{2}\right\rfloor}OW(VT(a_1,\dots,a_n))
\end{align*}

Now we will show that the relationship is preserved by contraction.  Consider the two virtual twist knots $VT(a_1,\dots,a_n)$ and $VT(a_1,\dots,b,0,a_i-b,\dots,a_n)$ (where $b<a_i$), with diagrams following the pattern in Figure \ref{F:virtualtwist}.  These two knots differ only by two consecutive virtual crossings in the twist.  Removing these two crossings by a Reidemister (II*) move does not change the labeling of the diagram, so both diagrams have the same value of $\Delta_0$.  Also, the two knots are equivalent, so they have the same odd writhe.  The only thing that remains to check is that the power $\delta+s(n)+\sum_{j=1}^{n}\left\lfloor\frac{a_j}{2}\right\rfloor$ has the same sign for both knot diagrams.  Then if the relation holds for $VT(a_1,\dots,b,0,a_i-b,\dots,a_n)$, it will also hold for the contraction 
$VT(a_1,\dots,a_n)$.

Now, we let $VT(a_1, \dots, b, 0, a_i-b, \dots, a_n) = VT(a_1', \dots, a_{n+2}')$, where:
$$a_j' = \begin{cases} a_j, & j < i \\ b, & j = i\\ 0, & j = i+1\\ a_i - b, & j = i+2\\ a_{j-2}, & j > i+2 \end{cases}$$

Recall that $s(j) = \sum_{k=1}^j{a_j+1}$ and $\d = \sum_{j=1}^n{p(a_j)p(s(j))}$ for $VT(a_1,\dots,a_n)$.  Let $s'(j)$ and $\d'$ be the corresponding sums for $VT(a_1', \dots, a_{n+2}')$.  Then,
$$s'(j) = \begin{cases} s(j) & j < i \\ s(i-1)+b+1, & j = i\\ s(i-1)+b+2, & j = i+1\\ s(j-2)+2, & j \geq i+2 \end{cases}$$
We can use this to compute $\d'$.
\begin{align*}
	\delta' &= \sum_{j=1}^n{p(a_j')p(s'(j))} \\
	&=\sum_{j=1}^{i-1}p(a_j)p(s(j))+p(b)p(s(i-1)+b+1)+p(0)p(s(i-1)+b+2)+p(a_i-b)p(s(i)+2)+\sum_{j=i+1}^{n}p(a_j)p(s(j))\\
	&=\d - p(a_i)p(s(i)) + p(b)p(s(i-1)+b+1)+p(a_i-b)p(s(i)+2)
\end{align*}
Finally, we observe that 
$$\sum_{j=1}^{n+2}\left\lfloor\frac{a_j'}{2}\right\rfloor=\sum_{j=1}^{i-1}\left\lfloor\frac{a_j}{2}\right\rfloor+\left\lfloor\frac{b}{2}\right\rfloor+\left\lfloor\frac{0}{2}\right\rfloor+\left\lfloor\frac{a_i-b}{2}\right\rfloor+\sum_{j=i+1}^{n}\left\lfloor\frac{a_j}{2}\right\rfloor=\sum_{j=1}^{n}\left\lfloor\frac{a_j}{2}\right\rfloor-\left\lfloor\frac{a_i}{2}\right\rfloor+\left\lfloor\frac{b}{2}\right\rfloor+\left\lfloor\frac{a_i-b}{2}\right\rfloor
$$
Since $s'(n+2) = s(n)+2$, this does not change the parity of the power of $-1$.  It remains to show that 
\begin{align*}
D &= \left(\d' + \sum_{j=1}^{n+2}\left\lfloor\frac{a_j'}{2}\right\rfloor\right) - \left(\d + \sum_{j=1}^{n}\left\lfloor\frac{a_j}{2}\right\rfloor\right) \\
&= - p(a_i)p(s(i)) + p(b)p(s(i-1)+b+1)+p(a_i-b)p(s(i)+2)-\left\lfloor\frac{a_i}{2}\right\rfloor+\left\lfloor\frac{b}{2}\right\rfloor+\left\lfloor\frac{a_i-b}{2}\right\rfloor
\end{align*}
is even.  In fact, $D$ is always 0.  To verify this, we must check four cases, each corresponding to the different choices of parity for $a_i$ and $b$. \medskip

\noindent{\sc Case 1:} Assume $a_i$ is even and $b$ is even. Then $a_i-b$ is also even, and $D = 0$. \medskip
	
\noindent{\sc Case 2:} Assume $a_i$ is even and $b$ is odd. Then $a_i - b$ is also odd.  So:
\begin{align*}
	D &= p(s(i-1)+b+1) + p(s(i)+2) - \frac{a_i}{2} + \frac{b-1}{2} + \frac{a_i-b-1}{2}\\
	&= p(s(i-1)) + p(s(i)) - 1 \\
	&= p(s(i-1)) + p(s(i-1) + a_i+1) - 1\\
	&= p(s(i-1)) + p(s(i-1)+1) - 1 = 0
\end{align*} \medskip
	
\noindent{\sc Case 3:} Assume $a_i$ is odd and $b$ is even.  Then $a_i-b$ is odd.  So:
\begin{align*}
	D &= -p(s(i)) + p(s(i)+2) - \frac{a_i-1}{2} + \frac{b}{2} + \frac{a_i-b-1}{2}\\
	&= -p(s(i)) + p(s(i)) = 0 
\end{align*} \medskip
	
\noindent{\sc Case 4:} Assume $a_i$ is odd and $b$ is odd. Then $a_i-b$ is even.  So:
\begin{align*}
	D &= -p(s(i)) + p(s(i-1)+b+1) - \frac{a_i-1}{2} + \frac{b-1}{2} + \frac{a_i-b}{2}\\
	&= -p(s(i)) + p(s(i-1)) \\
	&= -p(s(i-1) + a_i+1) + p(s(i-1)) \\
	&= -p(s(i-1)) + p(s(i-1)) = 0
\end{align*} \medskip

So contraction also preserves the relationship, which completes the proof.	
\end{proof}

\begin{cor}\label{C:owtwist}
For any virtual twist knot $K$,
$$2\vert \Delta_0(K)(-1,-1) \vert = \vert OW(K)\vert$$
\end{cor}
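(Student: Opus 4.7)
The plan is simple: the corollary falls out of Theorem \ref{T:oddwrithe} by taking absolute values. (One caveat: as stated the corollary uses $\Delta_0$, but since $(uv-1)$ divides $\Delta_0$ and vanishes at $(u,v)=(-1,-1)$, the left-hand side would be identically zero under a literal reading. We interpret it as $\overline{\Delta}_0$, consistent with the statement in the abstract.)

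First, I would invoke Theorem \ref{T:oddwrithe}, which provides the exact (signed) identity
$$2\overline{\Delta}_0(VT(a_1,\dots,a_n))(-1,-1) = (-1)^{\delta+s(n)+\sum_{i=1}^{n}\lfloor a_i/2\rfloor}\, OW(VT(a_1,\dots,a_n)).$$
Both sides are integers, and the coefficient $(-1)^{\delta+s(n)+\sum \lfloor a_i/2\rfloor}$ is $\pm 1$. Taking $|\cdot|$ on both sides therefore annihilates this sign and yields
$$2\bigl|\overline{\Delta}_0(K)(-1,-1)\bigr| = |OW(K)|$$
for $K = VT(a_1,\dots,a_n)$, which is exactly the corollary.

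There is no real obstacle: the corollary is a clean restatement of Theorem \ref{T:oddwrithe} in which the complicated sign information is discarded. All the substantive work is already contained in the preceding theorem, namely the base-case verification carried out in Lemma \ref{L:owbase} and the separate inductive arguments showing that the signed identity is preserved by the recursion of Theorem \ref{T:recursion} and by contraction of consecutive virtual crossings. One should simply note at the end that the passage to absolute values is what makes the statement a genuine invariant of the virtual knot rather than of a particular diagram, since $|\overline{\Delta}_0(K)(-1,-1)|$ is unaffected by the sign ambiguity introduced by normalization.
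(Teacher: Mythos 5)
Your proof is correct and follows essentially the same route as the paper: apply Theorem \ref{T:oddwrithe} to a diagram $VT(a_1,\dots,a_n)$ for $K$, observe that the sign $(-1)^{\delta+s(n)+\sum\lfloor a_i/2\rfloor}$ and the normalization factor $(-1)^r(uv)^{-s}$ both have absolute value $1$ at $(u,v)=(-1,-1)$, and take absolute values. Your remark that the statement should read $\overline{\Delta}_0$ rather than $\Delta_0$ (since $uv-1$ vanishes at $(-1,-1)$) is a correct catch, consistent with the abstract and with Theorem \ref{T:oddwrithe}.
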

\begin{proof}
We know that for any virtual twist knot $K$, there is some diagram for $K$ of the form $VT(a_1, \dots, a_n)$ shown in Figure \ref{F:virtualtwist}.  So $\Delta_0(K)(u,v) = (-1)^s(uv)^t\Delta_0(VT(a_1, \dots, a_n))$ for some integers $s$ and $t$.  But then:
$$2\vert \Delta_0(K)(-1,-1) \vert = 2\vert \Delta_0(VT(a_1, \dots, a_n))(-1,-1) \vert =\vert OW(VT(a_1, \dots, a_n))\vert =\vert OW(K)\vert$$
\end{proof}

We conjecture that this result extends to {\em all} virtual knots:

\begin{conj}
For any virtual knot $K$,
$$2\vert \Delta_0(K)(-1,-1) \vert = \vert OW(K)\vert$$
\end{conj}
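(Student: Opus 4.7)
The plan is to extend the approach of Theorem~\ref{T:oddwrithe} from virtual twist knots to all virtual knots by using Lemma~\ref{L:skein} as the main inductive tool. First I would recast the skein relation for the \emph{normalized} polynomial: in the notation of Lemma~\ref{L:skein}, when $L_+$ and $L_-$ are knot diagrams (so that $L_0$ is a 2-component link diagram), $(uv-1)$ divides both $\Delta_0(L_\pm)$ but not necessarily $\Delta_0(L_0)$, so dividing the skein relation by $(u-1)(v-1)(uv-1)$ yields
$$(u-1)(v-1)\bigl(\overline{\Delta}_0(L_+)(u,v) - \overline{\Delta}_0(L_-)(u,v)\bigr) = \Delta_0(L_0)(u,v).$$
Evaluating at $(u,v) = (-1,-1)$ gives
$$4\bigl(\overline{\Delta}_0(L_+)(-1,-1) - \overline{\Delta}_0(L_-)(-1,-1)\bigr) = \Delta_0(L_0)(-1,-1).$$

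Next I would track how the odd writhe transforms under a crossing change. Because a crossing change preserves the underlying Gauss diagram, the parity of each classical crossing is intrinsic to the shadow and unchanged; thus the crossing change alters $OW$ by $\pm 2$ precisely when the affected crossing is odd and leaves $OW$ unchanged otherwise. The heart of the induction would therefore be to prove the following structural lemma: for any 2-component virtual link $L_0$ obtained by smoothing a classical crossing $c$ of a virtual knot diagram,
$$\tfrac{1}{4}\,\Delta_0(L_0)(-1,-1) \;=\; \pm\,\mathbf{1}\!\left[c \text{ is odd}\right],$$
with the sign controlled by local combinatorial data analogous to the $(-1)^{\delta + s(n)}$ factor in Theorem~\ref{T:oddwrithe}. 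Combined with the trivial base case of the unknot (where both sides of the conjecture vanish), this would synchronize the ``jumps'' on both sides of the conjectured equality under each crossing change and complete the induction.

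The main obstacle is this structural lemma. While Lemma~\ref{L:smoothed} already gives explicit formulas for the 2-component links arising when smoothing crossings of virtual twist knots, a general 2-component virtual link may be arbitrarily complicated, and its Alexander matrix specialized at $u=v=-1$ does not obviously simplify. One promising route is to interpret $\Delta_0(L)(-1,-1)$ for a 2-component virtual link $L$ in terms of a virtual linking number between the two components, which is in turn tightly connected to the parity of the smoothed crossing. An alternative, more ambitious strategy is to seek a direct Gauss-diagram-theoretic formula for $\overline{\Delta}_0(K)(-1,-1)$ (for instance, as a signed count of chords satisfying a specified interlacement condition) paralleling the Gauss-diagram definition of $OW$, thereby bypassing crossing-by-crossing induction altogether.
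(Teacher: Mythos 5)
First, note that the paper does not prove this statement: it is left as a conjecture, established only for virtual twist knots (Theorem \ref{T:oddwrithe} and Corollary \ref{C:owtwist}) and checked by computer for knots with at most six crossings, so there is no proof in the paper to compare yours against. Your algebraic skeleton is sound as far as it goes: since $(u-1)(v-1)(uv-1)$ divides $\Delta_0$ of a knot diagram while only $(u-1)(v-1)$ is guaranteed for the two-component smoothing, dividing the skein relation of Lemma \ref{L:skein} by $(u-1)(v-1)(uv-1)$ and evaluating at $(-1,-1)$ does give $4\bigl(\overline{\Delta}_0(L_+)-\overline{\Delta}_0(L_-)\bigr)(-1,-1)=\Delta_0(L_0)(-1,-1)$, and your analysis of how $OW$ changes under a crossing change is correct. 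This is essentially the mechanism the paper itself uses in the twist-knot case.

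However, there are two genuine gaps. The first is the one you flag yourself: the entire content of the conjecture is concentrated in the ``structural lemma'' that $\tfrac14\Delta_0(L_0)(-1,-1)$ equals $\pm 1$ when the smoothed crossing is odd and $0$ otherwise, and you offer no proof of it. Moreover, even granting it, an identity between the absolute values $2\abs{\overline{\Delta}_0(K)(-1,-1)}$ and $\abs{OW(K)}$ does not follow from matching the sizes of the jumps: you must control the signs coherently along the entire sequence of crossing changes, and this sign bookkeeping is exactly what makes Theorem \ref{T:oddwrithe} laborious even for twist knots (the factor $(-1)^{\delta+s(n)+\sum\lfloor a_i/2\rfloor}$); you have no candidate for its general analogue. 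The second gap is fatal to the induction as stated: crossing changes are \emph{not} an unknotting operation for virtual knots. A crossing change preserves the underlying flat (shadow) virtual knot, and nontrivial flat virtual knots exist --- already the virtual trefoil $VT(2)$ has nontrivial flat class --- so most virtual knots can never be reduced to the unknot by crossing changes and Reidemeister moves. Your base case is therefore not the unknot but a complete set of representatives of flat virtual knot classes, an infinite family for which the conjecture would still have to be verified directly; the induction does not bottom out. (In the paper's twist-knot argument this problem is avoided because a crossing change there always creates a Reidemeister (II)-cancellable pair, strictly reducing the crossing number; that is special to the twist structure.) Your closing suggestion of a direct Gauss-diagram formula for $\overline{\Delta}_0(K)(-1,-1)$ is the more promising route, but as presented it is a direction rather than an argument.
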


\begin{rem}
In the same way as above, we can check that the conjecture holds for the other twist knots considered in Section \ref{S:clasp}.  We have also used a computer to verify the conjecture for the $364,253$ unoriented virtual knots with 6 or fewer crossings, as tabulated by Green \cite{gr}. By Proposition \ref{P:reverse}, changing the orientation will only change $\Delta_0(K)(-1,-1)$ by a sign.  Since the odd writhe is invariant under a change in orientation, this means the conjecture holds for all $725,854$ oriented virtual knots with 6 or fewer crossings.
\end{rem}\bigskip

\small

\normalsize

\end{document}